\colorlet{pink}{red!20}
\colorlet{sky}{blue!40}
\colorlet{grass}{green!30}
\colorlet{flower}{yellow!50}
\colorlet{lightgray}{black!30}
\setlist{itemsep=2pt, topsep=2pt}
\newtheorem{theorem}{Theorem}[section]
\newtheorem{lemma}[theorem]{Lemma}
\newtheorem{proposition}[theorem]{Proposition}
\newtheorem{observation}[theorem]{Observation}
\newtheorem{fact}[theorem]{Fact}
\newtheorem{problem}[theorem]{Problem}
\newcommand{\floor}[1]{\left\lfloor#1\right\rfloor}
\newcommand{\ceiling}[1]{\left\lceil#1\right\rceil}
\newcommand{\tbf}[1]{\textbf{#1}}
\newcommand{\fair}{\floor{\frac{1}{\lambda}\ceiling{\frac{n+1}{2}}}}
\newcommand{\fairk}{\floor{\frac{\ceiling{\frac{n+1}{2}}}{\ceiling{\frac{k+1}{2}}}}}
\newcommand{\cB}{\mathcal{B}}
\newcommand{\cP}{\mathcal{P}}
\title{Hamiltonian cycles in $k$-partite graphs} 
\author{Louis DeBiasio\thanks{Department of Mathematics; Miami University; Oxford, OH.  {\tt debiasld@miamioh.edu}, {\tt kruegera@miamioh.edu}, {\tt pritikd@miamioh.edu}, {\tt thompses@miamioh.edu}} \thanks{Research supported in part by Simons Foundation Collaboration Grant \# 283194.} \and Robert A. Krueger\footnotemark[1] \and Dan Pritikin\footnotemark[1] \and Eli Thompson\footnotemark[1]}
\date{\today}
\begin{document}
\maketitle

%\noindent\footnotenum[\ast]{Department of Mathematics; Miami University; Oxford, OH. {\tt debiasld@miamioh.edu}, {\tt kruegera@miamioh.edu}, {\tt pritikd@miamioh.edu}, {\tt thompses@miamioh.edu}}
%\noindent\footnotenum[\dagger]{Research supported in part by Simons Foundation Collaboration Grant \# 283194.}

\begin{abstract}
Chen, Faudree, Gould, Jacobson, and Lesniak \cite{CFGJL} determined the minimum degree threshold for which a balanced $k$-partite graph has a Hamiltonian cycle.  We give an asymptotically tight minimum degree condition for Hamiltonian cycles in arbitrary $k$-partite graphs in which all parts have at most $n/2$ vertices (a necessary condition). 
To do this, we first prove a general result which both simplifies the process of checking whether a graph $G$ is a robust expander and gives useful structural information in the case when $G$ is not a robust expander.  Then we use this result to prove that any $k$-partite graph satisfying the minimum degree condition is either a robust expander or else contains a Hamiltonian cycle directly.
\end{abstract}

\section{Introduction}
 
We write $A\sqcup B$ for the union of disjoint sets $A$ and $B$.  We write $G=(V_1\sqcup\dots \sqcup V_k, E)$ for a $k$-partite graph with parts $V_1, \dots, V_k$.  For $v\in V(G)$ we let $N(v)=\{u: uv\in E(G)\}$, $d(v)=|N(v)|$, and for $S\subseteq V(G)$, we let $d(v, S) = |N(v) \cap S|$. For $S\subseteq V(G)$, we let $N(S)=\{u: uv\in E(G) \text{ for some } v\in S\}$, we let $\delta(S)=\min\{d(v): v\in S\}$, and we let $\delta(R,S)=\min\{d(v, S): v\in R\}$.  %For a positive integer $n$, we write $[n]$ for $\{1,2,\dots, n\}$.

Dirac \cite{D} proved that every graph on $n\geq 3$ vertices with $\delta(G)\geq n/2$ contains a Hamiltonian cycle; furthermore, for every $n$, there exists a graph $F$ on $n$ vertices with $\delta(F)=\ceiling{n/2}-1$ such that $F$ does not contain a Hamiltonian cycle.  Later, Moon and Moser \cite{MM} proved that every balanced bipartite graph $G$ on $n\geq 4$ vertices with $\delta(G)>n/4$ contains a Hamiltonian cycle; furthermore, for every even $n$, there exists a balanced bipartite graph $F$ on $n$ vertices with $\delta(F)=\floor{n/4}$ such that $F$ does not contain a Hamiltonian cycle.  Finally, Chen et al.\ \cite{CFGJL} determined a sharp (within a constant) minimum degree condition for Hamiltonicity in all balanced $k$-partite graphs with $2\leq k\leq n$.  

\begin{theorem}[Chen, Faudree, Gould, Jacobson, Lesniak \cite{CFGJL}]\label{balancedk}
For all integers $n\geq k\geq 2$, if $G$ is a balanced $k$-partite graph on $n$ vertices such that
$$\delta(G)>\left(\frac{1}{2}+\frac{1}{2\ceiling{\frac{k+1}{2}}}-\frac{1}{k}\right)n= \begin{cases} \left(\frac{1}{2}-\frac{1}{k(k+1)}\right)n, & k \text{ odd,}   \\
\left(\frac{1}{2}-\frac{2}{k(k+2)}\right)n, & k \text{ even,}\end{cases}$$
then $G$ has a Hamiltonian cycle.  
\end{theorem}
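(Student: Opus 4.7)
The plan is to prove Theorem \ref{balancedk} by contradiction using the P\'osa rotation-extension technique adapted to the $k$-partite setting. Suppose $G$ is a balanced $k$-partite graph on $n$ vertices with parts $V_1, \dots, V_k$ each of size $n/k$, satisfying the stated minimum degree condition but containing no Hamiltonian cycle.

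First I would establish that $G$ contains a Hamiltonian path. Take a longest path $P = v_1 v_2 \cdots v_\ell$ and assume $\ell < n$. By maximality $N(v_1) \cup N(v_\ell) \subseteq V(P)$, and a Dirac-style index-shifting argument finds some $i$ with $v_1 v_{i+1}, v_\ell v_i \in E(G)$, producing a cycle on $V(P)$. Since the degree condition easily implies connectedness, some vertex outside $V(P)$ attaches to this cycle, contradicting maximality. The $k$-partite wrinkle is that $v_1, v_\ell$ cannot use neighbors in their own parts, so the degree slack given by the threshold must absorb the loss of up to $2n/k$ potential neighbors.

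Next, given a Hamiltonian path $P = v_1 \cdots v_n$, I would close it into a cycle via P\'osa rotations. Fix $v_n$ and rotate from the left, generating a set $X$ of potential left-endpoints of Hamiltonian paths ending at $v_n$; for each $x \in X$, fix $x$ and rotate on the right to obtain a set $Y_x$ of potential right-endpoints. The P\'osa lemma gives $|X|, |Y_x| \geq \delta(G) - O_k(1)$, and any edge from $x$ to $Y_x$ closes a Hamiltonian cycle. Using the $k$-partite degree bound, $x$ has more than the threshold number of neighbors outside its own part, so the heart of the argument is to show that $Y_x$ cannot avoid all of them.

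The main obstacle is the exact constant $1/(k(k+1))$ (for odd $k$) and $2/(k(k+2))$ (for even $k$), which requires a tight analysis of how the rotation sets $X$ and $Y_x$ distribute across the $k$ parts in near-extremal configurations. The odd-$k$ bound likely corresponds to an extremal construction in which each part is split across a sparse cut so that any Hamiltonian cycle would have to cross the cut too many times; the even-$k$ bound may carry an additional factor of $2$ from a parity obstruction that effectively forces two simultaneous rotations rather than one. Handling the precise constant will require a case analysis based on the parts containing $v_1$ and $v_n$ (and on whether they coincide), and absorbing the parity complication into the P\'osa rotation lemma itself.
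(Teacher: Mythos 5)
The paper does not prove Theorem \ref{balancedk}; it is quoted from Chen et al.\ and used as a benchmark, and the paper's own contribution (Theorem \ref{l-boundedHC}) only recovers it asymptotically, via the robust-expander route of Lemma \ref{robustorindep}, Propositions \ref{nosparsecuts}--\ref{extremal}, and Theorem \ref{robustexpanderHC} --- a completely different technique from rotation-extension.

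Judged on its own terms, your sketch has gaps that would prevent it from closing. The ``longest path implies cycle'' step via index shifting requires $d(v_1)+d(v_\ell)\geq\ell$, which fails at the threshold $\approx\left(\frac{1}{2}-\Theta\!\left(\frac{1}{k^2}\right)\right)n$ once $\ell$ is close to $n$; you would already need rotations, not just index shifting, to produce a Hamiltonian path. The bound $|X|\geq\delta(G)-O_k(1)$ is asserted, not derived: the P\'osa lemma gives $|N(X)|\leq 3|X|$ and hence only $|X|\geq\delta(G)/3$ without further work, and upgrading this is itself a nontrivial step that must use the $k$-partite structure. Most seriously, the exact constants are deferred to ``a tight analysis'' whose shape you guess wrong. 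There is no parity obstruction forcing ``two simultaneous rotations''; both displayed formulas are the single expression $\left(\frac{1}{2}+\frac{1}{2\ceiling{\frac{k+1}{2}}}-\frac{1}{k}\right)n$ evaluated at odd and even $k$, and $\ceiling{\frac{k+1}{2}}$ is the paper's parameter $\lambda$ for a balanced partition. The tight example (Proposition \ref{ml_example}, Case 4, specialized to the balanced case) is an independent set $S$ of size $\ceiling{\frac{n+1}{2}}$ spread as evenly as possible across $\lambda$ parts with all other possible edges present; since $|S|>n/2$ and a Hamiltonian cycle would have to alternate in and out of $S$, there is none. A rotation argument that is not calibrated against this construction cannot hit the correct constant, and that calibration is where essentially all of the work lies.
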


Recently, the first author and Spanier \cite{DS} slightly refined the above result, proving the following: For all integers $n\geq k\geq 2$, if $G$ is a balanced $k$-partite graph on $n$ vertices such that 
\begin{equation}\label{eqds}
\delta(G)\geq \ceiling{\frac{n}{2}}+\fairk-\frac{n}{k},
\end{equation}
then $G$ has a Hamiltonian cycle -- unless $n$ is divisible by 4 and $k=2$ or $k=\frac{n}{2}$, in which case $\delta(G)\geq \ceiling{\frac{n}{2}}+\fairk-\frac{n}{k}+1$ suffices.  Furthermore, Proposition \ref{ml_example} (Case 4) shows that this is best possible (unless $n$ is divisible by 4 and $k=2$ or $k=\frac{n}{2}$ in which case there are different tightness examples).  For purposes of comparison, we note that 
$$\floor{\left(\frac{1}{2}+\frac{1}{2\ceiling{\frac{k+1}{2}}}-\frac{1}{k}\right)n}\leq \ceiling{\frac{n}{2}}+\fairk-\frac{n}{k}\leq \ceiling{\left(\frac{1}{2}+\frac{1}{2\ceiling{\frac{k+1}{2}}}-\frac{1}{k}\right)n}.$$
Furthermore, note that when $k=n$, Theorem \ref{balancedk} is equivalent to Dirac's theorem and when $k=2$, Theorem \ref{balancedk} is equivalent to Moon and Moser's theorem.

Regarding Hamiltonian cycles in not-necessarily-balanced $k$-partite graphs, the following (implicit) result was used as a lemma to prove a theorem about monochromatic cycles in 2-edge colored graphs of minimum degree $(3/4+o(1))n$.

\begin{proposition}[DeBiasio, Nelsen {\cite[Lemma~7.1]{DN}}]\label{3/4-}
For all $\gamma>0$, there exists $n_0$ such that for all integers $k\geq 2$ and $n\geq n_0$, if $G=(V_1\sqcup \dots \sqcup V_k, E)$ is a $k$-partite graph on $n$ vertices with $n/2\geq |V_1|\geq \dots\geq |V_k|$ such that
\[\delta(V_i)\geq \left(\frac{3}{4}+\gamma\right)n-|V_i| ~\text{ for all } i\in [k],\]
then $G$ has a Hamiltonian cycle.
\end{proposition}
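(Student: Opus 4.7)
The plan is to use the absorption method, with a case split based on whether the largest part $|V_{1}|$ is close to $n/2$. The key local property is that every vertex $v\in V_i$ has at most $(1/4-\gamma)n$ non-neighbors in $V(G)\ssm V_i$, a strong quasi-random condition that will drive both the construction of absorbers and the covering step. Note that any path or cycle in $G$ automatically has consecutive vertices in different parts, so no extra ``part-respecting'' constraint is needed.

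In the non-extremal regime where $|V_{1}|\le(1/2-\eta)n$ for some $\eta=\eta(\gamma)>0$, I would build absorbers for each vertex $v$ as three-vertex gadgets $(x,y)$ with $x,y\in N(v)$ lying in distinct parts different from $V_i$ and with $xy\in E(G)$, so that $v$ can be inserted into any path using edge $xy$ by replacing $xy$ with $xvy$. A short computation using the min-degree hypothesis shows that every vertex admits at least $\Omega(\gamma^{2}n^{2})$ such absorbers, where the borderline subcases (in which two parts are both comparable to $n/4$) are handled by restricting the absorber endpoints to smaller parts. Standard absorption technology (\`a la R\"odl--Ruci\'nski--Szemer\'edi) then assembles an absorbing path $P_{A}$ of length $O(\gamma n)$ capable of swallowing any set of at most $\gamma^{2}n$ leftover vertices. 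In $G-V(P_{A})$ the min-degree condition is essentially preserved, so I would apply the Szemer\'edi Regularity Lemma to obtain a reduced graph $R$ (with the cluster-partition refining the $V_i$'s); its minimum degree comfortably exceeds $|V(R)|/2$, so a Hamilton cycle exists in $R$ by a Dirac-type argument, and the Blow-up Lemma lifts it to a path in $G-V(P_{A})$ missing at most $\gamma^{2}n$ vertices. The absorbing path then swallows the leftovers, closing the Hamilton cycle of $G$.

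In the extremal case where $|V_{1}|\ge(1/2-\eta)n$, the graph is essentially bipartite between $V_{1}$ and $B:=V(G)\ssm V_{1}$, and $G[V_{1},B]$ has minimum degree at least $(1/4+\gamma)n$ on each side (the vertices in $B$ belong to small parts and therefore send almost all their edges to $V_{1}$). The bipartite Moon--Moser theorem supplies a Hamilton cycle of $G[V_{1},B]$ when $|V_{1}|=|B|$, and the size discrepancy $|B|-|V_{1}|=O(\eta n)$ is absorbed by inserting the excess $B$-vertices (which have nearly maximal degree) along intra-$B$ edges between the parts $V_{2},\dots,V_{k}$.

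The main obstacle I anticipate is calibrating $\eta$ so that the non-extremal absorption argument and the extremal bipartite argument overlap without a gap; a secondary challenge is supplying enough absorbers in the middle regime where two parts are each close to $n/4$, which forces one to choose absorber endpoints carefully (or use longer absorbing gadgets) to keep the $\Omega(\gamma^{2}n^{2})$ bound.
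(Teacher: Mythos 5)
Note first that the paper does not actually prove Proposition~\ref{3/4-}: it is quoted from \cite{DN}, and in this paper it also follows from the stronger Theorem~\ref{l-boundedHC} together with the easy bound $\Phi(G)\leq(3/4+o(1))n$ (valid whenever all parts have size at most $n/2$). The route taken in this paper -- robust expansion via Lemma~\ref{robustorindep} and Theorem~\ref{robustexpanderHC}, plus an explicit extremal case (Proposition~\ref{extremal}) -- is genuinely different from your absorption/regularity plan, so the comparison is really between your sketch and the robust-expander machinery.

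More substantively, your absorber gadget has a gap that is not merely a matter of calibrating $\eta$. A pair $x,y\in N(v)$ lying in distinct parts with $xy\in E$ need not exist, even in your ``non-extremal'' regime $|V_1|\leq(1/2-\eta)n$: take $|V_1|\approx|V_2|\approx(1/2-\eta)n$ with all remaining parts tiny. Then for $v\in V_1$ the hypothesis only gives $d(v)\geq(1/4+\gamma+\eta)n$, which is below $|V_2|$ when $\gamma,\eta$ are small, so $N(v)$ can lie entirely inside the single part $V_2$ and no absorber pair exists. This is the balanced-bipartite obstruction, and it falls outside both of your branches, since the extremal branch needs $|V_1|\geq(1/2-\eta)n$ and near-bipartite structure between $V_1$ and its complement. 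Fixing it requires a structurally different gadget that absorbs \emph{pairs} of leftover vertices (one from each of the two large parts), not just a ``longer gadget of the same kind.'' A secondary concern is the extremal branch: after applying Moon--Moser to a balanced $G[V_1,B']$ you insert each of the roughly $2\eta n$ excess vertices $w$ along some cycle edge $ub$ with $u\in N(w)\cap V_1$ and $b\in N(w)\cap B'$, but for $w$ in a small part one only has $|N(w)\cap B'|=\Theta\bigl((\gamma-O(\eta))n\bigr)$, and a generic Moon--Moser cycle need not place any such $b$ adjacent to a vertex of $N(w)\cap V_1$ when $\gamma$ is small; you would need to force the cycle through pre-chosen edges (for instance via a Hamiltonian-biconnectedness statement such as Theorem~\ref{berge_lemma}, which is exactly what the paper's Proposition~\ref{extremal} does).
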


We noticed that for balanced bipartite graphs and balanced tripartite graphs, the degree condition of Proposition \ref{3/4-} essentially matched the degree condition in Theorem \ref{balancedk}.  We also suspected that the degree condition of Proposition \ref{3/4-} was far from best possible in most other cases.  This motivated the following problem, which it is the purpose of this paper to address:

Determine a function $\Phi$ such that for all $\gamma>0$, there exists $n_0$ such that if $G=(V_1\sqcup \dots \sqcup V_k, E)$ is a $k$-partite graph on $n\geq n_0$ vertices with all part sizes at most $n/2$ such that
\begin{equation}\label{eq:form}
\delta(V_i)\geq \Phi(|V_1|, |V_2|, \dots, |V_k|)+\gamma n-|V_i| ~\text{ for all } i\in [k],
\end{equation}
then $G$ has a Hamiltonian cycle; furthermore, given integers $n_1, \dots, n_k$ with $\sum_{i=1}^n n_i=n$ and $n/2\geq n_1\geq \dots\geq n_k$, show that there exists a $k$-partite graph $F=(V_1\sqcup \dots \sqcup V_k, E)$ on $n$ vertices with $|V_i|=n_i$ for all $i\in [k]$ such that $\delta(V_i)\geq \Phi(|V_1|, |V_2|, \dots, |V_k|)-|V_i|-1$ for all $i\in [k]$, yet $F$ has no Hamiltonian cycle.

Note that in the case where $G$ is balanced $k$-partite graph, such a function $\Phi$ must essentially match the value given in Theorem \ref{balancedk} (more specifically the value in \eqref{eqds}).

In order to describe a function $\Phi$ satisfying the above criteria, we begin with some preliminary definitions.  For integers $1\leq k\leq n$, a \tbf{partition of $n$ into $k$ parts} is a non-increasing list $(n_1, \dots, n_k)$ of positive integers such that $\sum_{i=1}^kn_i=n$.  Let $\cP_k(n)$ be the set of all partitions of $n$ into $k$ parts.  For all $P=(n_1, n_2, \dots, n_k)\in \cP_k(n)$, let
\begin{itemize}
\item $\lambda(P)$ be the smallest positive integer $\lambda$ such that $\sum_{i=1}^{\lambda}n_i\geq \ceiling{\frac{n+1}{2}}$, and 
\item $\mu(P)$ be the smallest positive integer $\mu$ such that $\sum_{i=1}^{\mu}n_i\geq \floor{\frac{n+1}{2}-\frac{n_\mu}{2}}+1$.  
\end{itemize}
Now given $P=(n_1, n_2, \dots, n_k)\in \cP_k(n)$ with $\lambda(P)=\lambda$ and $\mu(P)=\mu$, set 
\begin{itemize}
\item $f_i(P)=n_i+\sum_{j=1}^in_j \text{ for all } i\in [\mu-1] ~\text{ and } ~f(P)=\max_{i\in [\mu-1]}f_i(P) ,$

\item $g(P)=\ceiling{\frac{n}{2}+\frac{n_\mu}{2}} ,$

\item $h_1(P)=\ceiling{\frac{n}{2}}+n_\lambda,~ h_2(P)=\ceiling{\frac{n}{2}}+\fair, \text{ and } h(P)=\min\{h_1(P), h_2(P)\} ,$

\item $\Phi(P)=\max\{f(P), g(P), h(P)\}.$
\end{itemize}
%Now let $k\geq 2$ and let $G=(V_1\sqcup \dots \sqcup V_k, E)$ be a $k$-partite graph on $n$ vertices with $|V_1|\geq \dots\geq |V_k|$.  Let $P=(|V_1|, \dots, |V_k|)$ and define $\lambda(G)=\lambda(P)$, $\mu(G)=\mu(P)$, $f_i(G)=f_i(P)$, $f(G)=f(P)$, $g(G)=g(P)$, $h_i(G)=h_i(P)$, $h(G)=h(P)$, and finally $\Phi(G)=\Phi(P)$.  When it is unambiguous, we drop the reference to the graph in these terms, so, for example, we will write $f$ instead of $f(G)$.  

Note that that $\Phi$ does not depend on $k$, the number of parts of the partition.  Instead, $\Phi$ is a function of $|V_1|, \dots, |V_k|$ (from which we can extract the relevant information, which are the values of $\lambda=\lambda(|V_1|, \dots, |V_k|)$, $\mu=\mu(|V_1|, \dots, |V_k|)$, $|V_1|, \dots, |V_\mu|$, and $|V_\lambda|$).  

With the definition of $\Phi$ in hand, we may state our main result.  

\begin{theorem}\label{l-boundedHC}
Let $k\geq 2$ and $0< \frac{1}{n_0}\ll \gamma$.  If $G=(V_1\sqcup \dots \sqcup V_k, E)$ is a $k$-partite graph on $n \geq n_0$ vertices with $n/2\geq |V_1|\geq \dots\geq |V_k|$ such that
\[ \delta(V_i) \geq \Phi(|V_1|, |V_2|, \dots, |V_k|) +\gamma n-|V_i| ~\text{ for all } i\in [k],\]
then $G$ has a Hamiltonian cycle.
\end{theorem}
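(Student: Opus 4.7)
The plan, advertised in the abstract, is to apply the robust expander machinery of K\"uhn and Osthus in tandem with direct Hamilton-cycle constructions for near-extremal configurations. Recall that a robust $(\nu,\tau)$-expander on $n$ vertices with minimum degree at least $n/2$ (and $n$ large enough) contains a Hamiltonian cycle. The strategy is to show that every $G$ meeting the hypothesis is either a robust expander, in which case the authors' new simplified criterion for robust expansion applies, or else is so tightly constrained (being close to a tightness construction for one of the terms in $\Phi$) that a Hamiltonian cycle can be built by hand.

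First I would record the baseline numeric consequences of the hypothesis. Since $\Phi(G)\ge h_2(G)=\ceiling{n/2}+\fair$ and $|V_i|\le n/2$, the degree condition yields $\delta(V_i)\ge \fair+\gamma n$ for every $i$, and in particular $\delta(G)$ grows with $n$. The term $g(P)$ rules out any bipartite-like cut whose shore consists of fewer than $\mu$ parts, and the terms $f_i(P)$ for $i<\mu$ rule out small vertex sets with neighborhoods confined to only a few parts. These are exactly the local expansion properties needed as input to the simplified criterion. The dichotomy is then: either the criterion applies and $G$ is a robust $(\nu,\tau)$-expander---so a K\"uhn--Osthus-type theorem (applied possibly to an auxiliary graph obtained by using the $\gamma n$ slack to supplement vertices of degree less than $n/2$) produces a Hamilton cycle---or the criterion fails with a witness set $S$ whose robust neighborhood is small.

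The main obstacle is the latter case: given such a witness $S$, one must show that $G$ is forced to be close to one of the extremal constructions corresponding to $f(P)$, $g(P)$, $h_1(P)$, or $h_2(P)$, and then exhibit a Hamiltonian cycle by hand. For the $f$- and $g$-type near-extremals I expect this to proceed by finding a convenient bipartition of $V(G)$ and threading an almost-perfect matching with a few crossing edges supplied by the $\gamma n$ slack; for the $h_1$- and $h_2$-type near-extremals the argument should mirror the proof of Theorem \ref{balancedk}, adapted to unequal part sizes, perhaps through an absorbing-path construction. Calibrating the parameters $\nu$, $\tau$, and $\gamma$ so that this dichotomy is clean, and ensuring the K\"uhn--Osthus step actually yields a Hamilton cycle (and not merely a long cycle) when some vertices have degree below $n/2$, will be the technical crux of the proof.
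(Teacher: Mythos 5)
Your high-level plan coincides with the paper's: prove $G$ is either a robust expander (via the authors' new simplified criterion) or in a restricted extremal situation, and build the Hamilton cycle by hand in the latter case. However, two of the specific claims in your plan are off in ways that would substantially change the work required.

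First, the robust-expander-to-Hamilton-cycle theorem of K\"uhn--Osthus--Treglown (and Lo--Patel, which the paper cites as Theorem \ref{robustexpanderHC}) does \emph{not} require $\delta(G)\ge n/2$; it requires only $\delta(G)\ge \eta n$ for some fixed $\eta>0$ with $\tau\le\eta/16$. The hypothesis of Theorem \ref{l-boundedHC} only gives $\delta(G)\ge n/4$ (Observation \ref{l-fairfacts}.\ref{7}), not $n/2$, so if the $n/2$ threshold really were necessary the whole approach would be blocked. Your proposed workaround of passing to ``an auxiliary graph obtained by using the $\gamma n$ slack to supplement vertices of degree less than $n/2$'' would not repair this: supplementing degrees means adding edges, and a Hamilton cycle found in the supplemented graph may use those fictitious edges, giving nothing back in $G$. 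Fortunately no workaround is needed, since $\eta=1/4$ suffices.

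Second, you anticipate four distinct near-extremal configurations, one for each of $f$, $g$, $h_1$, $h_2$. The paper's Proposition \ref{degcondrobust} shows that only \emph{one} extremal structure can survive: a nearly independent set $S$ of size roughly $n/2$ contained in the union of between $\mu$ and $\lambda-1$ parts whose total size is at most $n/2$ (called $\nu$-extremal). The cases $r\le\mu-1$ and $r\ge\lambda$ always yield robust expansion directly from the $f$-, $g$-, and $h$-conditions; only the intermediate range $\mu\le r\le\lambda-1$ can fail. So there is far less case analysis in the extremal step than you expect, and the structure it produces is a near-balanced near-bipartite one. The paper then handles this case not with an absorbing-path argument but with Berge's criterion for Hamiltonian bi-connectedness of bipartite graphs (Theorem \ref{berge_lemma}): a short connecting path $P$ is built to balance the two sides, and then Berge's theorem furnishes a Hamilton path in the remainder between the endpoints of $P$. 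You should verify the exact degree-sequence inequalities needed to invoke Berge's theorem, since that calculation --- not any issue with degrees below $n/2$ --- is where the $\gamma n$ slack is actually spent.
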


The function $\Phi$ may strike the reader as overly complicated.  However, it is in some sense necessarily so; that is, Theorem \ref{l-boundedHC} is asymptotically tight for all $k$-partite graphs on $n$ vertices.

\begin{proposition}\label{ml_example}
Let $n\geq k\geq 2$ and let $P=(n_1, \dots, n_k)$ be a partition of $n$ into $k$ parts with $n/2\geq n_1\geq \dots \geq n_1$.  There exists a $k$-partite graph $F=(V_1\sqcup \dots \sqcup V_k, E)$ with $|V_i|=n_i$ for all $i\in [k]$ such that
\[ \delta(V_i) \geq \Phi(|V_1|, |V_2|, \dots, |V_k|) - |V_i| - 1 ~\text{ for all } i\in [k],\]
yet $F$ has no Hamiltonian cycle.
\end{proposition}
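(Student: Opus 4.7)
Since $\Phi(P)=\max\{f(P),g(P),h_1(P),h_2(P)\}$ (using $h=\min\{h_1,h_2\}$), it suffices to exhibit, for each of the four functions $\phi\in\{f,g,h_1,h_2\}$, a $k$-partite graph $F_\phi$ with part sizes $(n_1,\dots,n_k)$ that is not Hamiltonian and satisfies $\delta(V_i)\geq\phi(P)-|V_i|-1$ for every $i\in[k]$; the proof then selects, for the given $P$, whichever construction corresponds to the term realizing $\Phi(P)$. Thus the argument breaks into four cases.

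Each candidate $F_\phi$ is constructed from the complete $k$-partite graph $K_{n_1,\dots,n_k}$ by designating a small ``separator'' $S$ drawn from prescribed parts and deleting every edge between two large blobs $L,R$ of the remaining vertices, so that the only way to cross between $L$ and $R$ is through $S$. The non-Hamiltonicity is then a Chv\'atal--Erd\H{o}s / arc-counting argument: any Hamiltonian cycle of $F_\phi$ decomposes into maximal arcs in $L$, in $R$, and in $S$, with $L$-arcs separated from $R$-arcs by arcs in $S$. The sizes of $L,R,S$ in each case are calibrated so that the number of required bridges (or the partite independence inside $L$ or $R$) strictly exceeds $|S|$.

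The four separators track the threshold indices $\lambda=\lambda(P)$ and $\mu=\mu(P)$:
\begin{itemize}
\item $h_1=\lceil n/2\rceil+n_\lambda$: take $S=V_\lambda$, $L=\bigcup_{i<\lambda}V_i$, $R=\bigcup_{i>\lambda}V_i$, using $\sum_{i<\lambda}n_i<\lceil(n+1)/2\rceil\leq\sum_{i\leq\lambda}n_i$ to control $|L|,|R|$;
\item $h_2=\lceil n/2\rceil+\fair$: take $q:=\fair$ vertices from each of $V_1,\dots,V_\lambda$ to form $S$, and split $V\smallsetminus S$ into $L,R$ as evenly as the partite structure allows (this is the generalisation of the balanced-$k$-partite construction from~\cite{CFGJL});
\item $g=\lceil n/2+n_\mu/2\rceil$: choose $S$ so that after its removal $V_\mu$ appears as an independent block on one side, forcing a partite-independence obstruction of size $n_\mu$;
\item $f=\max_{i\in[\mu-1]}\bigl(n_i+\sum_{j\leq i}n_j\bigr)$: for the index $i^\ast$ realising this maximum, let $S$ consist of $V_{i^\ast}$ together with a balancing share of $V_1,\dots,V_{i^\ast-1}$, with $L,R$ chosen so that the crossing count falls short by exactly one.
\end{itemize}

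The main obstacle is the case-by-case verification that the bound $\delta(V_i)\geq\phi(P)-|V_i|-1$ holds \emph{simultaneously} for \emph{every} part $V_i$, not merely the parts directly involved in the separator. This requires careful bookkeeping of how the deleted $L$--$R$ edges interact with each part, using the defining inequalities of $\lambda$ and $\mu$ together with arithmetic on the floor and ceiling appearing in the definitions of $g$ and $\fair$. Non-Hamiltonicity, once the separator structure is in place, is a short counting argument; and the necessary adjustments in the small-boundary cases (e.g.\ $n$ divisible by $4$ with $k=2$ or $k=n/2$, noted after Theorem~\ref{balancedk}) are handled by perturbing the construction by a single edge.
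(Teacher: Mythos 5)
The paper's proof does not use a ``separator plus two blobs'' structure at all. In every case it builds an \emph{independent} set $S$ that itself is the obstruction: either $S$ has $|N(S)|\le|S|-1$ (when $\Phi=f_i$ or $\Phi=g$; there $S$ is only joined to a set $T$ of size $|S|-1$, and the rest of the graph is complete multipartite), or $S$ is independent of size $\lceil(n+1)/2\rceil>n/2$ (when $\Phi=h_1$ or $\Phi=h_2$; there $S=X_1\cup\cdots\cup X_\lambda$ with $X_i\subseteq V_i$, and \emph{all} edges with at most one endpoint in $S$ are kept). Your paradigm---keep the complete multipartite graph on each of $L$ and $R$, delete only the $L$--$R$ edges, and route through a small $S$---is not the same obstruction, and for the $h$ cases it does not yield one.

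Concretely, take $n=100$, $k=10$, and part sizes $(15,15,10,10,10,8,8,8,8,8)$. Then $\lambda=5$, $n_\lambda=10=\fair$, so $h_1=h_2=60$ and $\Phi=h=60$. Your $h_1$ construction sets $S=V_5$ (size $10$), $L=V_1\cup\cdots\cup V_4$ (size $50$), $R=V_6\cup\cdots\cup V_{10}$ (size $40$), and deletes all $L$--$R$ edges. This graph \emph{is} Hamiltonian: $L$ and $R$ are dense complete multipartite graphs that can each be partitioned into $5$ paths, and since every vertex of $S$ is adjacent to everything in $L\cup R$, one can stitch these $10$ paths together through the $10$ vertices of $S$ into a Hamiltonian cycle. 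So the construction is not an obstruction. It also fails the degree requirement: a vertex $v\in V_6\subseteq R$ has $d(v)=|R|+|S|-n_6=42$, while $\Phi-n_6-1=51$. The inequality $\sum_{i<\lambda}n_i<\lceil(n+1)/2\rceil\le\sum_{i\le\lambda}n_i$ does not control $|R|$ from below, so parts on the $R$ side lose too much degree when you delete the $L$--$R$ edges. The same structural problem undermines your $h_2$ and $g$ sketches as written: with only two blobs separated by $S$, removing $S$ leaves at most two components, and the toughness/arc-count obstruction you invoke needs the independence inside $L$ (or the large independent set $S$ itself) to be explicit, which your constructions do not arrange. Finally, the remark about perturbing by a single edge in the ``$n$ divisible by $4$'' cases is a red herring here; that caveat in the paper pertains to the exact (non-asymptotic) threshold of DeBiasio--Spanier, and Proposition~\ref{ml_example} as stated needs no such adjustment.

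To repair the $h$ cases you would need to place the independent set across the first $\lambda$ parts, exactly as the paper does: choose $X_i\subseteq V_i$ for $i\in[\lambda]$ with $|X_i|\ge n_\lambda$ (resp.\ $|X_i|\ge\fair$), arrange $|X_1\cup\cdots\cup X_\lambda|=\lceil(n+1)/2\rceil$, and keep all other possible edges. Then $S$ is independent of size $>n/2$, which is an immediate Hamiltonicity obstruction, and the degree calculations go through because every deleted edge has both endpoints in $S$.
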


We will prove Proposition \ref{ml_example} in Section \ref{sec:examp}, and we will prove Theorem \ref{l-boundedHC} in Section \ref{sec:robust}.  In Section \ref{sec:frac}, we prove a relaxed version of Theorem \ref{l-boundedHC} (but with a tight minimum degree condition) in order to motivate the methods used in Section \ref{sec:robust}.

\subsection{Open problems and further observations} 

Proving a non-asymptotic version of Theorem \ref{l-boundedHC} is the most immediate remaining problem.

\begin{problem}\label{tightconjec}
For all integers $n\geq k\geq 2$, let $G=(V_1\sqcup \dots \sqcup V_k, E)$ be a $k$-partite graph on $n$ vertices with $n/2\geq |V_1|\geq \dots\geq |V_k|$.  Determine the smallest constant $c$, depending only on $|V_1|, \dots, |V_k|$, such that if  
\[ \delta(V_i) \geq \Phi(|V_1|, |V_2|, \dots, |V_k|)+c-|V_i| ~\text{ for all } i\in [k],\]
then $G$ has a Hamiltonian cycle.
\end{problem}

It is possible that aside from some exceptional cases (e.g. the results from \cite{DS} mentioned earlier), the constant could be 0.  This possibility is further supported by the upcoming Theorem \ref{warmup}.

We also raise the following problem regarding a possible alternative form of the minimum degree condition \eqref{eq:form}.

\begin{problem}\label{prob2}
Determine a function $\Psi$ such that for all $\gamma>0$, there exists $n_0$ such that if $G=(V_1\sqcup \dots \sqcup V_k, E)$ is a $k$-partite graph on $n\geq n_0$ vertices with all part sizes at most $n/2$ such that
\begin{equation*}\label{eq:form2}
\delta(V_i)\geq \Psi(|V_1|, |V_2|, \dots, |V_k|)(n-|V_i|)+\gamma n  ~\text{ for all } i\in [k],
\end{equation*}
then $G$ has a Hamiltonian cycle; furthermore, given integers $n_1, \dots, n_k$ with $\sum_{i=1}^n n_i=n$ and $n/2\geq n_1\geq \dots\geq n_k$, show that there exists a $k$-partite graph $F=(V_1\sqcup \dots \sqcup V_k, E)$ on $n$ vertices with $|V_i|=n_i$ for all $i\in [k]$ such that $\delta(V_i)\geq \Psi(|V_1|, |V_2|, \dots, |V_k|)(n-|V_i|)-1$ for all $i\in [k]$, yet $F$ has no Hamiltonian cycle.  
\end{problem}

When it is unambiguous, we write $\Phi$ instead of $\Phi(|V_1|, \dots, |V_k|)$, $f$ instead of $f(|V_1|, \dots, |V_k|)$, etc.  Before moving on, we make some useful observations regarding $\Phi$ and the associated parameters.

\begin{observation}\label{l-fairfacts}
Let $k\geq 2$ and let $G=(V_1\sqcup \dots \sqcup V_k, E)$ be a $k$-partite graph on $n$ vertices with $|V_1|\geq \dots\geq |V_k|$. Then the following hold:
\begin{enumerate}[label=(\roman*)]
\item\label{1} $\lambda \leq \ceiling{\frac{k+1}{2}}$.  Furthermore, if $G$ is balanced, then $\lambda=\ceiling{\frac{k+1}{2}}$.
\item\label{2} For all $I \subseteq [k]$, if $|I| \leq \lambda-1$, then $\sum_{i \in I} |V_i| \leq \frac{n}{2}$.  So in particular, $\lambda\geq 2$ if and only if all parts have at most $n/2$ vertices. 
%Thus when $\lambda=1$, $G$ does not have a Hamiltonian cycle.
%\item\label{3} If $|V_\lambda|\geq \frac{n}{2\lambda}$, then $|V_1| \leq \frac{n}{\lambda}$.
\item\label{4} $f_i \leq \floor{\frac{n+1}{2} + \frac{|V_i|}{2}}$ for all $i \in [\mu-1]$.
\item\label{4.5} $\sum_{i=1}^\mu |V_i| \leq \floor{\frac{n+1}{2} + \frac{|V_\mu|}{2}}$.
%\item\label{5} If $\lambda\geq 2$, then $\Phi \leq \ceiling{\frac{3n}{4}}$. (c.f. Proposition \ref{3/4-})
%\item\label{6} If $|V_\lambda|\geq \frac{n}{2\lambda}$, then $h_2=h=\Phi$.
\item\label{7} If $\lambda \geq 2$, then $\Phi-|V_k|\geq \dots \geq \Phi-|V_1| \geq n/4$.
\item\label{8} If $G$ is balanced, then $\Phi=h_2=\ceiling{\frac{n}{2}}+\fair=\ceiling{\frac{n}{2}}+\floor{\frac{\ceiling{\frac{n+1}{2}}}{\ceiling{\frac{k+1}{2}}}}$.
\end{enumerate}
\end{observation}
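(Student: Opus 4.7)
The proof is a series of direct calculations from the definitions of $\lambda$, $\mu$, $f$, $g$, $h_1$, and $h_2$, handled item-by-item.

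For (i), I would fix $\ell = \ceiling{(k+1)/2}$ and use the monotonicity $n_1 \geq \dots \geq n_k \geq 1$ to compare the first $\ell$ parts against the last $k - \ell$ term-by-term, obtaining $2 \sum_{i=1}^{\ell} n_i \geq n + (2\ell - k) \geq n + 1$; since the left-hand side is an integer, this forces $\sum_{i=1}^{\ell} n_i \geq \ceiling{(n+1)/2}$, so $\lambda \leq \ell$. In the balanced case, a direct computation shows $(\ell - 1) n/k < \ceiling{(n+1)/2}$, pinning $\lambda = \ell$. For (ii), monotonicity gives $\sum_{i \in I} n_i \leq \sum_{i=1}^{\lambda - 1} n_i$ whenever $|I| \leq \lambda - 1$, and the minimality of $\lambda$ together with integrality yields $\sum_{i=1}^{\lambda - 1} n_i \leq \ceiling{(n+1)/2} - 1 = \floor{n/2}$; the ``in particular'' clause is just the $|I| = 1$ case.

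For (iii), the minimality of $\mu$ gives $\sum_{j=1}^{i} n_j \leq \floor{(n+1)/2 - n_i/2}$ for each $i \leq \mu - 1$, and adding $n_i$ to both sides proves the claim. For (iv), the case $\mu = 1$ reduces to the trivial inequality $n_1 \leq \floor{(n + n_1 + 1)/2}$; for $\mu \geq 2$, apply (iii) at $i = \mu - 1$, add $n_\mu$ to both sides, and use $n_{\mu - 1} \geq n_\mu$ to absorb the extra term inside the floor.

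For (v), since the parts are non-increasing, it suffices to prove $\Phi - n_1 \geq n/4$, and I would split on whether $\mu \geq 2$ or $\mu = 1$. If $\mu \geq 2$, then $\Phi \geq f \geq f_1 = 2 n_1$ gives $\Phi - n_1 \geq n_1$, while $\Phi \geq g \geq \ceiling{n/2}$ gives $\Phi - n_1 \geq n/2 - n_1$; the larger of these two lower bounds is always at least $n/4$. If $\mu = 1$, then $g = \ceiling{(n + n_1)/2}$, and combining $\lambda \geq 2$ with (ii) yields $n_1 \leq n/2$, so $\Phi - n_1 \geq g - n_1 \geq (n - n_1)/2 \geq n/4$.

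Finally, (vi) is a balanced-case computation with $n_i = a := n/k$ and $\lambda = \ceiling{(k+1)/2}$ (from (i)). It suffices to show that $\floor{\ceiling{(n+1)/2}/\lambda} \in [\ceiling{a/2},\, a]$: the upper bound yields $h_2 \leq h_1$ and hence $h = h_2$; the lower bound, combined with (iii) to bound $f$ and the identity $\ceiling{(n+a)/2} - \ceiling{n/2} \leq \ceiling{a/2}$ to bound $g - \ceiling{n/2}$, gives $h_2 \geq \max(f, g)$, so $\Phi = h_2$. Each endpoint inequality for $\floor{\ceiling{(n+1)/2}/\lambda}$ reduces to comparing a ratio of the form $(k a + c_1)/(k + c_2)$ with $a$ or $\ceiling{a/2}$, where $c_1, c_2 \in \{1, 2\}$ depend on the parities of $k$ and $n$. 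The main obstacle in the whole proof is the parity bookkeeping in (vi); no individual inequality is deep.
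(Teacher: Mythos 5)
Your proof is correct and proceeds by essentially the same route as the paper: each item is a direct computation from the definitions of $\lambda$, $\mu$, $f$, $g$, $h$, with (i)--(v) matching the paper's steps almost exactly and (vi) handled by the same kind of parity case-analysis on $k$ and $n/k$. Your reduction in (vi) to showing $\floor{\ceiling{(n+1)/2}/\lambda}\in[\ceiling{a/2},a]$ is organized a little differently from the paper's (which directly compares $h_2$ to $\ceiling{(mk+m)/2}$), and has the mild advantage of explicitly verifying $h_2\le h_1$, a point the paper leaves implicit.
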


\begin{proof}

\begin{enumerate}
\item When $k$ is odd, we have $\sum_{i=1}^{\frac{k+1}{2}}|V_i|>n/2$. When $k$ is even, we have $\sum_{i=1}^{\frac{k}{2}}|V_i|>n/2$ unless $G$ is balanced, in which case $\sum_{i=1}^{\frac{k}{2}+1}|V_i|>n/2$.  So $\lambda\leq \ceiling{\frac{k+1}{2}}$.  If $G$ is balanced, then clearly $\lambda\geq \ceiling{\frac{k+1}{2}}$.
\item Let $I \subseteq [k]$ with $|I| \leq \lambda-1$.  Since $|V_1|\geq\dots \geq |V_k|$, we have by the minimality of $\lambda$ that $\sum_{i \in I} |V_i| \leq \sum_{i=1}^{\lambda-1}|V_i|\leq \frac{n}{2}$.  So if $\lambda\geq 2$, then $n/2\geq |V_1|\geq \dots\geq |V_k|$, and if $\lambda=1$, then $|V_1|>n/2$.
%\item Suppose $|V_\lambda|\geq \frac{n}{2\lambda}$.  If $|V_1| > \frac{n}{\lambda}$, then by \ref{2} we have $\frac{n}{2} \geq \sum_{i = 1}^{\lambda-1} |V_i| > \frac{n}{\lambda} + (\lambda-2)\frac{n}{2\lambda} = \frac{n}{2}$, a contradiction.
\item Let $i\in [\mu-1]$.  By the definition of $\mu$ we have  $\sum_{j=1}^i |V_j| \leq \floor{\frac{n+1}{2} - \frac{|V_i|}{2}}$, so 
\[f_i = |V_i| + \sum_{j=1}^i |V_j| \leq |V_i|+\floor{\frac{n+1}{2} - \frac{|V_i|}{2}}= \floor{\frac{n+1}{2} + \frac{|V_i|}{2}}.\]
\item By the definition of $\mu$ we have 
%$\sum_{i=1}^\mu |V_i| > \frac{n}{2} - \frac{|V_\mu|}{2}$ and 
\[ \sum_{i=1}^\mu |V_i| = \sum_{i=1}^{\mu-1} |V_i| + |V_\mu| \leq \floor{\frac{n+1}{2} - \frac{|V_{\mu-1}|}{2}} + |V_\mu| \leq \floor{\frac{n+1}{2} + \frac{|V_\mu|}{2}} .\]
%\item Since $\lambda \geq 2$, we have $|V_i| \leq \frac{n}{2}$ for all $i \in [k]$ by \ref{2}. Then the result follows from \ref{4} and the definitions of $g$ and $h$...
%\item The first equality is immediate, as is $h_2 \geq g$ from \ref{3} and $h_2 \geq f_t$ from \ref{3} and \ref{4}.
%First, note that if $\lambda=1$, then $|V_1|>n/2$ and $G$ is 1-fair.  Second, note that if $\lambda=\ceiling{\frac{k+1}{2}}$, then $\sum_{i=1}^{\ceiling{\frac{k-1}{2}}}|V_i|\leq n/2$ and thus $\left|V_{\ceiling{\frac{k+1}{2}}}\right|\geq \frac{n/2}{k-\ceiling{\frac{k-1}{2}}}\geq  \frac{n/2}{\ceiling{\frac{k+1}{2}}}$, so $G$ is $\ceiling{\frac{k+1}{2}}$-fair.
\item If $\mu=1$ or $|V_1| < n/4$, we have $g - |V_1| = \ceiling{\frac{n}{2} + \frac{|V_\mu|}{2}} - |V_1| \geq n/4$. Otherwise $\mu>1$ and $|V_1| \geq n/4$ so $\Phi-|V_1|\geq f_1 - |V_1| = |V_1| \geq n/4$.
\item When $G$ is balanced, there exists an integer $m$ such that $n=mk$.  By \ref{1}, \ref{4}, and the definitions of $g$ and $h$, we have  
\begin{align*}
\Phi&=\max\left\{\floor{\frac{n+1}{2}+\frac{n}{2k}}, \ceiling{\frac{n}{2}+\frac{n}{2k}}, \ceiling{\frac{n}{2}}+\floor{\frac{\ceiling{\frac{n+1}{2}}}{\ceiling{\frac{k+1}{2}}}}\right\}\\
&=\max\left\{\floor{\frac{mk+1+m}{2}}, \ceiling{\frac{mk+m}{2}}, \ceiling{\frac{mk}{2}}+\floor{\frac{\ceiling{\frac{mk+1}{2}}}{\ceiling{\frac{k+1}{2}}}}\right\}.
\end{align*}
Since $\floor{\frac{mk+1+m}{2}}=\ceiling{\frac{mk+m}{2}}$, it remains to show 
\begin{equation}\label{mk}
\ceiling{\frac{mk}{2}}+\floor{\frac{\ceiling{\frac{mk+1}{2}}}{\ceiling{\frac{k+1}{2}}}}\geq \ceiling{\frac{mk+m}{2}}.
\end{equation}

If $m$ is even or $k$ is even, \eqref{mk} reduces to $\floor{\frac{\frac{mk+2}{2}}{\ceiling{\frac{k+1}{2}}}}\geq \ceiling{\frac{m}{2}}$.  Since $m\geq 1$ and $k\geq 2$, we have 
$$\frac{mk+2}{2}-\ceiling{\frac{m}{2}}\ceiling{\frac{k+1}{2}}\geq \frac{mk+2}{2}-\left(\frac{m+1}{2}\right)\left(\frac{k+2}{2}\right)=\frac{(m-1)(k-2)}{4}\geq 0,$$
and thus \eqref{mk} is satisfied.  

If $m$ is odd and $k$ is odd, \eqref{mk} reduces to $\floor{\frac{mk+1}{k+1}}\geq \frac{m-1}{2}$.  Since $m\geq 1$ and $k\geq 2$, we have $$mk+1-(k+1)\frac{m-1}{2}=\frac{(m+1)(k-1)+4}{2}>0,$$
and thus \eqref{mk} is satisfied.
\end{enumerate}
\end{proof}

Finally, note that Theorem \ref{l-boundedHC} asymptotically implies Theorem \ref{balancedk} because when $G$ is a balanced $k$-partite graph on $n$ vertices, we have by Observation \ref{l-fairfacts}.\ref{8} that 
$$\Phi(n/k, \dots, n/k)-|V_i| =\ceiling{\frac{n}{2}}+\fairk - \frac{n}{k}.
$$

Despite the fact that our result only implies Theorem \ref{balancedk} (and hence Dirac's theorem) asymptotically, we note that there are situations (other than the explicit setting of $k$-partite graphs) where Theorem \ref{l-boundedHC} can be applied to guarantee the existence of a Hamiltonian cycle where Theorem \ref{balancedk} (and hence Dirac's theorem) does not apply.  For example, suppose $G$ is a graph on $n$ vertices with $\delta(G)\geq (59/120+\gamma)n$ and suppose that there is a partition of $V(G)$ into independent sets $V_1, V_2, V_3, V_4, V_5$ with $|V_1|=|V_2|=9n/40$, $|V_3|=8n/40$, and $|V_4|=|V_5|=7n/40$.  We have $\Phi(9n/40, 9n/40, 8n/40, 7n/40, 7n/40)=2n/3$ and note for all $i\in [5]$ that $\delta(V_i)\geq (59/120+\gamma)n= 2n/3+\gamma n-|V_5|\geq 2n/3+\gamma n-|V_i|$.  Thus $G$ has a Hamiltonian cycle by Theorem \ref{l-boundedHC}.

\section{Tightness examples}\label{sec:examp}

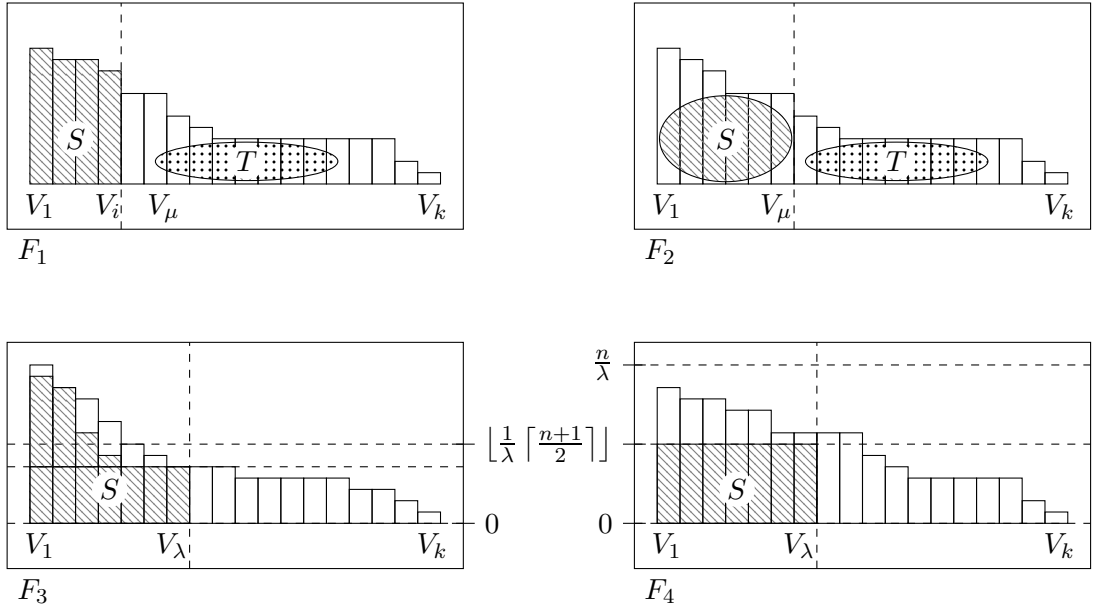
\begin{figure}[ht]
\centering
\begin{tikzpicture}[scale = .15]
\begin{scope}[shift={(0,0)}]
\draw (-20, -10) node[below right]{$F_1$} rectangle (20, 10);
%\draw[pattern=north west lines, pattern color=gray] (-18, -6) rectangle (-4, 1);
%\draw (-21, 8) node[left]{$\frac{n}{\lambda}$} -- (-19, 8);
%\draw[dashed] (-20, 8) -- (20, 8);
%\draw (-21, -6) node[left]{$0$} -- (-19, -6);
%\draw[dashed] (-20, -6) -- (20, -6);
%\draw (-21, 1) node[left]{$\frac{n}{2\lambda}$} -- (-19, 1);
%\draw[dashed] (-20, 1) -- (20, 1);
\draw[dashed] (-10,-10) -- (-10, 10);
\draw[pattern=north west lines, pattern color=gray] (-18, -6) node[below]{$~~V_1$} rectangle (-16, 6);
\draw[pattern=north west lines, pattern color=gray] (-16, -6) rectangle (-14, 5);
\draw[pattern=north west lines, pattern color=gray] (-14, -6) rectangle (-12, 5);
\draw[pattern=north west lines, pattern color=gray] (-12, -6) node[below]{$~~V_i$} rectangle (-10, 4);
\draw (-10, -6) rectangle (-8, 2);
\draw (-8, -6) node[below]{$~~~~V_\mu$} rectangle (-6, 2);
\draw (-6, -6) rectangle (-4, 0);
\draw (-4, -6) rectangle (-2, -1);
\draw (-2, -6) rectangle (0, -2);
\draw (0, -6) rectangle (2, -2);
\draw (2, -6) rectangle (4, -2);
\draw (4, -6) rectangle (6, -2);
\draw (6, -6) rectangle (8, -2);
\draw (8, -6) rectangle (10, -2);
\draw (10, -6) rectangle (12, -2);
\draw (12, -6) rectangle (14, -2);
\draw (14, -6) rectangle (16, -4);
\draw (16, -6) node[below]{$~~~V_k$} rectangle (18, -5);
\draw[pattern=dots, pattern color=black] (1,-4) node[circle, fill=white, inner sep=1]{$T$} ellipse (8 and 1.7);
\draw (-14, -2) node[circle, fill=white, inner sep=1]{$S$};
\end{scope}

\begin{scope}[shift={(55,0)}]
\draw (-20, -10) node[below right]{$F_2$} rectangle (20, 10);
%\draw[pattern=north west lines, pattern color=gray] (-18, -6) rectangle (-4, 1);
%\draw (-21, 8) node[left]{$\frac{n}{\lambda}$} -- (-19, 8);
%\draw[dashed] (-20, 8) -- (20, 8);
%\draw (-21, -6) node[left]{$0$} -- (-19, -6);
%\draw[dashed] (-20, -6) -- (20, -6);
%\draw (-21, 1) node[left]{$\frac{n}{2\lambda}$} -- (-19, 1);
%\draw[dashed] (-20, 1) -- (20, 1);
\draw[dashed] (-6,-10) -- (-6, 10);
\draw (-18, -6) node[below]{$~~V_1$} rectangle (-16, 6);
\draw (-16, -6) rectangle (-14, 5);
\draw (-14, -6) rectangle (-12, 4);
\draw (-12, -6) rectangle (-10, 2);
\draw (-10, -6) rectangle (-8, 2);
\draw (-8, -6) node[below]{$~V_\mu$} rectangle (-6, 2);
\draw[pattern=north west lines, pattern color=gray] (-12, -2) node[circle, fill=white, inner sep=1]{$S$} ellipse (5.8 and 3.8);
\draw (-6, -6) rectangle (-4, 0);
\draw (-4, -6) rectangle (-2, -1);
\draw (-2, -6) rectangle (0, -2);
\draw (0, -6) rectangle (2, -2);
\draw (2, -6) rectangle (4, -2);
\draw (4, -6) rectangle (6, -2);
\draw (6, -6) rectangle (8, -2);
\draw (8, -6) rectangle (10, -2);
\draw (10, -6) rectangle (12, -2);
\draw (12, -6) rectangle (14, -2);
\draw (14, -6) rectangle (16, -4);
\draw (16, -6) node[below]{$~~~V_k$} rectangle (18, -5);
\draw[pattern=dots, pattern color=black] (3,-4) node[circle, fill=white, inner sep=1]{$T$} ellipse (8 and 1.7);
\end{scope}

\begin{scope}[shift={(0,-30)}]
\draw (-20, -10) node[below right]{$F_3$} rectangle (20, 10);
\draw[pattern=north west lines, pattern color=gray] (-18, -6) rectangle (-4, -1);
%\draw (-21, 8) node[left]{$\frac{n}{\lambda}$} -- (-19, 8);
%\draw[dashed] (-20, 8) -- (20, 8);
\draw (21, -6) node[right]{$0$} -- (19, -6);
\draw[dashed] (-20, -6) -- (20, -6);
\draw (21, 1) -- (19, 1);
\draw[dashed] (-20, 1) -- (20, 1);
%\draw (21, -1) node[below right]{$|V_\lambda|$} -- (19, -1);
\draw[dashed] (-20, -1) -- (20, -1);
\draw[dashed] (-4,-10) -- (-4, 10);
\draw (-18, -6) node[below]{$~~V_1$} rectangle (-16, 8);
\draw[pattern=north west lines, pattern color=gray] (-18,-1) rectangle (-16,7);
\draw (-16, -6) rectangle (-14, 6);
\draw[pattern=north west lines, pattern color=gray] (-16,-1) rectangle (-14,6);
\draw (-14, -6) rectangle (-12, 5);
\draw[pattern=north west lines, pattern color=gray] (-14,-1) rectangle (-12,2);
\draw (-12, -6) rectangle (-10, 3);
\draw[pattern=north west lines, pattern color=gray] (-12,-1) rectangle (-10,0);
\draw (-10, -6) rectangle (-8, 1);
\draw (-8, -6) rectangle (-6, 0);
\draw (-6, -6) node[below]{$~V_{\lambda}$} rectangle (-4, -1);
\draw (-4, -6) rectangle (-2, -1);
\draw (-2, -6) rectangle (0, -1);
\draw (0, -6) rectangle (2, -2);
\draw (2, -6) rectangle (4, -2);
\draw (4, -6) rectangle (6, -2);
\draw (6, -6) rectangle (8, -2);
\draw (8, -6) rectangle (10, -2);
\draw (10, -6) rectangle (12, -3);
\draw (12, -6) rectangle (14, -3);
\draw (14, -6) rectangle (16, -4);
\draw (16, -6) node[below]{$~~~V_k$} rectangle (18, -5);
\draw (-11, -3) node[circle, fill=white, inner sep=1]{$S$};
\end{scope}

\begin{scope}[shift={(55,-30)}]
\draw (-20, -10) node[below right]{$F_4$} rectangle (20, 10);
\draw[pattern=north west lines, pattern color=gray] (-18, -6) rectangle (-4, 1);
\draw (-21, 8) node[left]{$\frac{n}{\lambda}$} -- (-19, 8);
\draw[dashed] (-20, 8) -- (20, 8);
\draw (-21, -6) node[left]{$0$} -- (-19, -6);
\draw[dashed] (-20, -6) -- (20, -6);
\draw (-21, 1) node[left]{$\fair$} -- (-19, 1);
\draw[dashed] (-20, 1) -- (20, 1);
\draw[dashed] (-4,-10) -- (-4, 10);
\draw (-18, -6) node[below]{$~~V_1$} rectangle (-16, 6);
\draw (-16, -6) rectangle (-14, 5);
\draw (-14, -6) rectangle (-12, 5);
\draw (-12, -6) rectangle (-10, 4);
\draw (-10, -6) rectangle (-8, 4);
\draw (-8, -6) rectangle (-6, 2);
\draw (-6, -6) node[below]{$~V_{\lambda}$} rectangle (-4, 2);
%\draw[dashed] (-20, -6) rectangle (20, 2);
\draw (-4, -6) rectangle (-2, 2);
\draw (-2, -6) rectangle (0, 2);
\draw (0, -6) rectangle (2, 0);
\draw (2, -6) rectangle (4, -1);
\draw (4, -6) rectangle (6, -2);
\draw (6, -6) rectangle (8, -2);
\draw (8, -6) rectangle (10, -2);
\draw (10, -6) rectangle (12, -2);
\draw (12, -6) rectangle (14, -2);
\draw (14, -6) rectangle (16, -4);
\draw (16, -6) node[below]{$~~~V_k$} rectangle (18, -5);
\draw (-11, -3) node[circle, fill=white, inner sep=1]{$S$};
\end{scope}
\end{tikzpicture}
\caption{The four families of tightness examples $F_1, F_2, F_3, F_4$ in Proposition \ref{ml_example}.}
\end{figure}

\begin{proof}[Proof of Proposition \ref{ml_example}]
We will construct a $k$-partite graph on vertex set $V_1\sqcup \dots \sqcup V_k$ where $|V_i|=n_i$ for all $i\in [k]$.  Note that when we use the phrase \emph{possible edges}, we mean edges having endpoints in different parts.  The construction splits into cases depending on the value of $\Phi$.

\tbf{Case 1} ($\Phi=f_i$ for some $i\in [\mu-1]$).  Let $S=V_1\cup\dots\cup V_i$ and let $T\subseteq V_{i+1}\cup \dots\cup V_k$ with $|T|=|S|-1$.  Now add all edges between $S$ and $T$ and add all other possible edges which are not incident with $S$.  Let $F_1$ be the resulting $k$-partite graph.  Since $S$ is an independent set with $|N(S)|\leq |S|-1$, we see that $F_1$ has no Hamiltonian cycle. 

First note that since $i<\mu$, we have 
\begin{equation}\label{Sup}
|S| = \sum_{h=1}^i|V_h|\leq \floor{\frac{n+1}{2}-\frac{|V_i|}{2}}
\end{equation}
For $j\in [i]$, we have $|V_j|\geq |V_i|$ and thus
\[\delta(V_j) = |S|-1=\sum_{h=1}^i|V_h|-1\geq |V_i|+\sum_{h=1}^i|V_h|-|V_j|-1 = f_i - |V_j| - 1 = \Phi - |V_j| - 1\]
and for $j\in [k]\setminus [i]$, by \eqref{Sup} and Observation \ref{l-fairfacts}.\ref{4} we have 
\begin{align*}
\delta(V_j)\geq n-|V_j|-|S|\geq n-\floor{\frac{n+1}{2}-\frac{|V_{i}|}{2}}-|V_j|&\geq \floor{\frac{n+1}{2}+\frac{|V_{i}|}{2}}-|V_j|-1\\ 
&\geq f_i-|V_j|-1 = \Phi - |V_j|-1.
\end{align*}

\tbf{Case 2} ($\Phi=g$). By the definition of $\mu$ we have $\sum_{i=1}^\mu|V_i|\geq \floor{\frac{n+1}{2}-\frac{|V_\mu|}{2}}+1$ and by Observation \ref{l-fairfacts}.\ref{4.5} we have $\sum_{i=\mu+1}^k|V_i|=n-\sum_{i=1}^\mu|V_i|\geq n-\floor{\frac{n+1}{2} + \frac{|V_\mu|}{2}}= \floor{\frac{n}{2}-\frac{|V_\mu|}{2}}$.  So we let $S\subseteq V_1\cup\dots \cup V_\mu$ with $|S|=\floor{\frac{n}{2}-\frac{|V_{\mu}|}{2}}+1$ and $T\subseteq V_{\mu+1}\cup \dots \cup V_k$ with $|T|=|S|-1$.  Now add all edges between $S$ and $T$ and add all other possible edges which are not incident with $S$.  Let $F_2$ be the resulting $k$-partite graph.  Since $S$ is an independent set with $|N(S)|\leq |S|-1$, we see that $F_2$ has no Hamiltonian cycle.  

For $i\in [\mu]$ we have $|V_i|\geq |V_\mu|$ and thus
\[\delta(V_i)\geq |S|-1 =\floor{\frac{n}{2}-\frac{|V_{\mu}|}{2}}\geq \ceiling{\frac{n}{2}+\frac{|V_{\mu}|}{2}}-|V_i|-1 = g - |V_i| - 1 = \Phi - |V_i| - 1 \]
and for $j\in [k]\setminus [\mu]$, we have 
\begin{align*}
\delta(V_j)\geq n-|V_j|-|S| =n-\floor{\frac{n}{2}-\frac{|V_{\mu}|}{2}}-|V_j| -1&= \ceiling{\frac{n}{2}+\frac{|V_{\mu}|}{2}}-|V_j| -1\\
&= g - |V_j| - 1 = \Phi - |V_j| - 1 .
\end{align*}

\tbf{Case 3} ($\Phi=h_1$). Note that the case implies $n_\lambda \leq \floor{\frac{1}{\lambda}\ceiling{\frac{n+1}{2}}}$.  For $i \in [\lambda]$ select $X_i \subseteq V_i$ with $|X_i| \geq |V_\lambda|$ and $|X_1 \cup \dots \cup X_\lambda| = \ceiling{\frac{n+1}{2}}$. This is possible because 
\[ \lambda|V_\lambda|\leq \lambda\fair\leq \ceiling{\frac{n+1}{2}}\leq \sum_{i=1}^{\lambda} |V_i| .\]

Let $S = X_1\cup\dots \cup X_\lambda$. Now add all possible edges which have at most one endpoint in $S$. Let $F_3$ be the resulting $k$-partite graph.  Since $S$ is an independent set of size greater than $n/2$, we see that $F_3$ has no Hamiltonian cycle.  

For $i \in [k] \setminus [\lambda]$, we have $\delta(V_i) = n - |V_i| \geq \Phi - |V_i|$.  For $i\in [\lambda]$,  we have
\[\delta(V_i) = n - |S| - |V_i| + |X_i| \geq n - \ceiling{\frac{n+1}{2}} + |V_\lambda| - |V_i|=\ceiling{\frac{n}{2}}+|V_\lambda|-|V_i|-1= h - |V_i| - 1 .\]

\tbf{Case 4} ($\Phi=h_2$). Note that the case implies $n_\lambda \geq \floor{\frac{1}{\lambda}\ceiling{\frac{n+1}{2}}}$.  For $i \in [\lambda]$ select $X_i \subseteq V_i$ with $|X_i| \geq \floor{\frac{1}{\lambda}\ceiling{\frac{n+1}{2}}}$ and $|X_1 \cup \dots \cup X_\lambda| = \ceiling{\frac{n+1}{2}}$. This is possible because $|V_\lambda| \geq \floor{\frac{1}{\lambda}\ceiling{\frac{n+1}{2}}}$, $\sum_{i=1}^{\lambda} |V_i| \geq \ceiling{\frac{n+1}{2}}$, and $\lambda \floor{\frac{1}{\lambda} \ceiling{\frac{n+1}{2}}} \leq \ceiling{\frac{n+1}{2}}$. Note that there is some $j$ such that $|X_j| = \floor{\frac{1}{\lambda}\ceiling{\frac{n+1}{2}}}$, as otherwise
\[ \ceiling{\frac{n+1}{2}}  = \sum_{i=1}^{\lambda} |X_i| \geq \lambda \left( \floor{\frac{1}{\lambda}\ceiling{\frac{n+1}{2}}} + 1 \right) \geq \lambda \left( \frac{\ceiling{\frac{n+1}{2}}-(\lambda-1)}{\lambda}+ 1 \right) > \ceiling{\frac{n+1}{2}} ,\]
a contradiction. Let $S = X_1\cup\dots \cup X_\lambda$. Now add all possible edges which have at most one endpoint in $S$. Let $F_4$ be the resulting $k$-partite graph.  Since $S$ is an independent set of size greater than $n/2$, we see that $F_4$ has no Hamiltonian cycle.

For $i \in [k] \setminus [\lambda]$, we have $\delta(V_i) = n - |V_i| \geq \Phi - |V_i|$.  For $i\in [\lambda]$,  we have
$$\delta(V_i) \geq n-|V_i|-\sum_{\stackrel{j=1}{j\neq i}}^\lambda|X_j|=n-\ceiling{\frac{n+1}{2}}+|X_i|-|V_i|\geq \ceiling{\frac{n}{2}}+\fair-|V_i|-1,$$
with equality whenever $|X_j|=\fair$. 
\end{proof}

\section{Weak expansion and perfect fractional matchings}\label{sec:frac}

The purpose of this section is to draw parallels between an old result of Tutte and Lemma \ref{robustorindep} which gives a sufficient condition for a graph to be a robust expander.

For the purposes of this paper, we define a \tbf{fractional matching} to be a graph in which each component is an edge or an odd cycle (in the literature, this is sometimes referred to as a \emph{basic 2-matching}).  We say that a graph has a \tbf{perfect fractional matching} if it has a fractional matching as a spanning subgraph.  We say that $T\subseteq V(G)$ \tbf{weakly expands} if $|N(T)|\geq |T|$.  Note that if a graph $G$ has a perfect fractional matching, then every set $T\subseteq V(G)$ weakly expands.  Tutte \cite{T} proved the following, which characterizes graphs having a perfect fractional matching (note the parallels between Theorem \ref{tutte} and the upcoming Theorem \ref{robustexpanderHC}).

\begin{theorem}[Tutte \cite{T}]\label{tutte}
Let $G$ be a graph.  If $|N(T)|\geq |T|$ for all sets $T\subseteq V(G)$, then $G$ has a perfect fractional matching.
\end{theorem}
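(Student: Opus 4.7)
The plan is to reduce to Hall's marriage theorem via the bipartite double cover of $G$. Form the bipartite graph $G'$ on vertex set $V(G)\times\{0,1\}$ with edge set $\{(u,0)(v,1):uv\in E(G)\}$. Any subset of the $0$-side has the form $S\times\{0\}$ for some $S\subseteq V(G)$, and then $N_{G'}(S\times\{0\})=N_G(S)\times\{1\}$, so the hypothesis gives $|N_{G'}(S\times\{0\})|=|N_G(S)|\geq|S|$. Thus Hall's condition holds on the $0$-side, and since the two sides of $G'$ are equinumerous, $G'$ has a perfect matching $M$.

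Such an $M$ encodes a permutation $\sigma$ of $V(G)$ via $(u,0)(\sigma(u),1)\in M$. Since $u\sigma(u)\in E(G)$ for every $u$ and $G$ has no loops, $\sigma$ has no fixed points. Decomposing $\sigma$ into disjoint cycles and forming the union of the corresponding edges and cycles of $G$ produces a spanning subgraph $H\subseteq G$ whose components are either edges (coming from $2$-cycles of $\sigma$) or cycles of length $\geq 3$ (coming from longer cycles of $\sigma$).

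Finally, for every component of $H$ which is an even cycle $v_1v_2\cdots v_{2m}v_1$ with $m\geq 2$, replace it by the perfect matching $\{v_1v_2,v_3v_4,\ldots,v_{2m-1}v_{2m}\}$ on its vertex set. The resulting spanning subgraph of $G$ has every component equal to an edge or an odd cycle, so it is a perfect fractional matching in the sense of the paper.

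The only real content lies in observing that the weak-expansion condition for arbitrary subsets of $V(G)$ is precisely Hall's condition on the $0$-side of the bipartite double cover; after that reduction, extracting the permutation and handling even cycles by re-matching is entirely routine, so I do not expect any genuine obstacle to arise.
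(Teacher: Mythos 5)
Your proof is correct and is essentially the same argument as the paper's: the paper's ``auxiliary bipartite graph on $V$ and $V'$'' is exactly the bipartite double cover you construct, and both proofs apply Hall's theorem and then re-match even cycles. You spell out the fixed-point-free permutation correspondence more explicitly, but the underlying idea is identical.
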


\begin{proof}
Suppose $|N(T)|\geq |T|$ for all sets $T\subseteq V(G)$.
Now let $H$ be an auxiliary bipartite graph on $V$ and $V'$, two copies of $V(G)$ (where for each $x \in V$, we denote the copy of $x$ in $V'$ as $x'$) where $xy' \in E(H)$ if and only if $xy \in E(G)$.
Note that by our assumption, Hall's condition holds and thus $H$ contains a perfect matching.  The perfect matching in $H$ corresponds to a spanning subgraph $M\subseteq G$ in which each component is an edge or a cycle.  Since any   cycle of $M$ with length $2k$ contains $k$ disjoint edges, $G$ has a perfect fractional matching.
\end{proof}

If one wants to apply Theorem \ref{tutte}, it is very useful to use the following fact which says that if all independent sets weakly expand, then all sets weakly expand.  We provide the proof for completeness and to draw parallels to the upcoming Lemma \ref{robustorindep}.

\begin{fact}\label{lem:indall}
Let $G$ be a graph on vertex set $V$.  If $|N(S)|\ge|S|$ for all independent sets $S\subseteq V$, then $|N(T)|\ge|T|$ for all sets $T\subseteq V$.  
\end{fact}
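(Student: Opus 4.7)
The plan is to pick a maximal independent subset of $T$ and then refine it so that the hypothesis can be applied to a piece whose neighbourhood lies outside $T$ entirely. First, let $T\subseteq V$ be arbitrary, let $I$ be a maximal independent subset of $T$, and set $A=T\setminus I$. By maximality of $I$, every vertex of $A$ has a neighbour in $I$, so $A\subseteq N(T)$ and moreover $A\subseteq N(I)$. The trouble with applying the hypothesis directly to $I$ is that $N(I)$ may lie almost entirely inside $T$ (indeed, it always contains all of $A$), so the bound $|N(I)|\geq |I|$ alone need not produce enough vertices in $N(T)$.

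To sidestep this, partition $I$ by whether or not its vertices see $A$: set $I_1=I\cap N(A)$ and $I_0=I\setminus N(A)$. The set $I_0$ is independent (being a subset of $I$) and, by construction, has no neighbour in $A$; since $I$ itself is independent, $I_0$ also has no neighbour in $I$. Hence $N(I_0)$ is disjoint from $T$. Applying the hypothesis to the independent set $I_0$ yields $|N(I_0)|\geq |I_0|$, and all of these neighbours contribute to $N(T)\setminus T$.

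Finally, we total up three disjoint subsets of $N(T)$: the set $A\subseteq N(T)\cap T$, the set $I_1\subseteq N(A)\cap I\subseteq N(T)\cap T$ (disjoint from $A$), and $N(I_0)\subseteq N(T)\setminus T$ (disjoint from both of the previous since it lies outside $T$). This gives
\[|N(T)|\geq |A|+|I_1|+|N(I_0)|\geq |A|+|I_1|+|I_0|=|A|+|I|=|T|,\]
which is the desired conclusion. The only real step of substance is the choice of $I_0$: once one sees that the hypothesis must be applied to a \emph{maximal} independent subset refined to the part with no edges into $T\setminus I$, the rest is bookkeeping.
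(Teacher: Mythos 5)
Your proof is correct, and although you arrive at it via a maximal independent set and then refine, the set you ultimately apply the hypothesis to is exactly the same one the paper uses: $I_0 = I \setminus N(A)$ is precisely the set of isolated vertices of $G[T]$ (any such vertex must lie in a maximal independent subset $I$ and has no neighbours in $A$, and conversely every vertex of $I_0$ has no neighbour in $A$ or in $I$, hence none in $T$). The paper simply names this set directly and runs the same counting as a contradiction rather than a direct estimate, so the two arguments are essentially identical.
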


\begin{proof}
Suppose there exists $T\subseteq V(G)$ such that $|N(T)|<|T|$. Let $S$ be the set of isolated vertices in $G[T]$, the graph induced by $T$.  Note that $N(S) \subseteq N(T) \setminus (T \setminus S)$, $T \setminus S \subseteq N(T)$, and $S$ is an independent set in $G$.  Thus $|N(S)| \leq |N(T) \setminus (T \setminus S)| = |N(T)| - |T \setminus S| < |T| - |T \setminus S| = |S|$, contradicting the assumption that $|N(S)|\geq |S|$.
\end{proof}

Now we use Theorem \ref{tutte} and Fact \ref{lem:indall} to prove the following result whose main purpose is to provide a ``template'' for the upcoming proof of Theorem \ref{degcondrobust}.  Note that the examples given in Proposition \ref{ml_example} show that the degree condition in the following result is tight.

\begin{theorem}\label{warmup}
Let $n\geq k\geq 2$.  If $G=(V_1\sqcup \dots \sqcup V_k, E)$ is a $k$-partite graph on $n$ vertices with $n/2\geq |V_1|\geq \dots\geq |V_k|$ such that
\[ \delta(V_i) \geq \Phi(|V_1|, \dots, |V_k|) - |V_i| ~\text{ for all } i\in [k],\]
then $|N(S)|\geq |S|$ for all independent sets $S\subseteq V(G)$ and consequently $G$ has a perfect fractional matching.
\end{theorem}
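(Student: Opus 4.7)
By Fact~\ref{lem:indall} and Theorem~\ref{tutte}, it suffices to verify Hall's condition for every independent set $S \subseteq V(G)$, so I would fix such an $S$ and set $S_i = S \cap V_i$, $I = \{i : S_i \neq \emptyset\}$, and $i_1 = \max I$ (so $|V_{i_1}| = \min_{i \in I} |V_i|$). Every $v \in S_{i_1}$ has $N(v) \subseteq N(S)$, which gives the base estimate $|N(S)| \geq \Phi - |V_{i_1}|$; this will drive most of the case analysis.

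My plan is to split on the position of $i_1$. When $i_1 \leq \mu-1$, the bound $\Phi \geq f_{i_1}$ together with $|S| \leq \sum_{i \in I} |V_i| \leq \sum_{j=1}^{i_1} |V_j| = f_{i_1} - |V_{i_1}|$ closes the case immediately. When $i_1 \geq \mu$, one has $|V_{i_1}| \leq |V_\mu|$ and $\Phi \geq g \geq (n + |V_{i_1}|)/2$, so $\Phi - |V_{i_1}| \geq n - \Phi$, and the sub-case $|S| \leq n - \Phi$ falls straight out of the base estimate. In the remaining sub-case $|S| > n - \Phi$, running the same degree count at a vertex $u \in T := V \setminus (S \cup N(S))$ that lies in a part $V_j$ with $j \notin I$ yields $|S| \leq n - \Phi$, a contradiction, so $T \subseteq \bigcup_{i \in I} V_i$. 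Once this containment is in hand, if $|I| \leq \lambda-1$ then Observation~\ref{l-fairfacts}.\ref{2} forces $\sum_{i \in I}|V_i| \leq n/2$ and hence $|N(S)| \geq n - \sum_{i \in I}|V_i| \geq n/2 \geq |S|$.

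The heart of the proof is therefore the sub-case $|I| \geq \lambda$, where the $h$-term of $\Phi$ finally plays a role. The key new input is to run the degree count at a vertex $v \in S_i$ for each $i \in I$: since $V_i$, $T \setminus V_i$, and $S \setminus V_i$ are pairwise disjoint and miss $N(v)$, one obtains $|T \setminus V_i| + |S| - |S_i| \leq n - \Phi$. Summing over $i \in I$ and using $T \subseteq \bigcup_{i \in I} V_i$ collapses this to $(|I|-1)(|S| + |T|) \leq |I|(n - \Phi)$, giving the crucial bound
\[
|N(S)| \;=\; n - |S| - |T| \;\geq\; \frac{|I|\Phi - n}{|I|-1}.
\]

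The obstacle I expect to spend real effort on is upgrading this estimate to $|N(S)| \geq \ceiling{n/2}$, since that then forces $|S| \leq \floor{n/2} \leq |N(S)|$ and closes the argument. When $\Phi \geq h_1$, the upgrade is immediate via $|V_{i_1}| \leq |V_\lambda|$ and the base estimate: $|N(S)| \geq \Phi - |V_{i_1}| \geq h_1 - |V_\lambda| = \ceiling{n/2}$. The delicate configuration is $\Phi < h_1$, in which $\Phi \geq h = h_2 = \ceiling{n/2} + q$ for $q := \floor{\ceiling{(n+1)/2}/\lambda}$. Here the defining inequality $\lambda(q+1) > \ceiling{(n+1)/2} = \floor{n/2} + 1$, combined with $|I| \geq \lambda$, rearranges precisely to $(|I|\Phi - n)/(|I|-1) > \ceiling{n/2} - 1$, and integrality of $|N(S)|$ promotes the bound to $\ceiling{n/2}$ as required. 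This is where the real work lies: the underlying real-valued bound matches Case~4 of Proposition~\ref{ml_example} at equality, so the whole argument hinges on squeezing out a single additional unit from the hypothesis $\delta(V_i) \geq \Phi - |V_i|$ (as opposed to $\Phi - |V_i| - 1$).
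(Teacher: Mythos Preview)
Your proof is correct and follows the paper's case structure closely through the early cases (splitting on the smallest part meeting $S$, handling $i_1\le\mu-1$, then $|S|\le n-\Phi$, then showing the parts outside $I$ lie in $N(S)$, then $|I|\le\lambda-1$). The real divergence is in the final case $|I|\ge\lambda$ with $h=h_2$. There the paper argues by pigeonhole: it picks a single vertex $v_r$ in the smallest part $U_r$ meeting $S$, observes that $d(v_r,A)$ exceeds $\sum_{i<r}(|U_i|-\lfloor\lceil(n+1)/2\rceil/\lambda\rfloor-1)$, so some part $U_i$ absorbs at least $|U_i|-\lfloor\lceil(n+1)/2\rceil/\lambda\rfloor$ neighbours of $v_r$, and then adds the degree of a vertex in $S\cap U_i$ to reach $\lceil n/2\rceil$. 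Your route is instead to write down the single inequality $|T\setminus V_i|+|S|-|S_i|\le n-\Phi$ for each $i\in I$, sum over $I$, and use $T\subseteq\bigcup_{i\in I}V_i$ to collapse everything to $(|I|-1)(|S|+|T|)\le |I|(n-\Phi)$, yielding the closed-form lower bound $|N(S)|\ge(|I|\Phi-n)/(|I|-1)$; the arithmetic $\lambda(q+1)>\lceil(n+1)/2\rceil$ then promotes this to $\lceil n/2\rceil$ via integrality.

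Both arguments are short, but they extract the $h_2$-contribution differently: the paper localises to two well-chosen vertices, while you average the degree constraint symmetrically over all parts meeting $S$. Your averaging bound is slightly more robust (it never singles out a ``worst'' part and gives a formula valid for all $|I|\ge 2$), and it makes the tightness against Case~4 of Proposition~\ref{ml_example} more transparent, since equality in your summed inequality corresponds exactly to the balanced choice $|X_i|=\lfloor\lceil(n+1)/2\rceil/\lambda\rfloor$ there. The paper's pigeonhole version, on the other hand, is the template that gets reused almost verbatim in the robust-expansion proof (Proposition~\ref{degcondrobust}, Case~2), where one needs control on $RN_\nu(S_1)\cap U_i$ for a specific $i$ rather than an averaged bound.
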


\begin{proof}
Let $S$ be a non-empty independent set in $G$. Relabel the parts as $U_1, \dots, U_k$ and define $r$ so that $A:= U_1\cup \dots \cup U_r$ is the union of the parts which intersect $S$ and $|U_1|\geq \dots \geq |U_r|$, and $B:=U_{r+1}\cup \dots \cup U_k$ is the union of the parts which do not intersect $S$ and $|U_{r+1}|\geq \dots \geq |U_k|$.  

If $1\leq r\leq \mu-1$, then $$|N(S)|\geq \delta(U_r)\geq \delta(V_r) \geq f_r - |V_r| = \sum_{i=1}^r|V_i|\geq \sum_{i=1}^r |U_i| \geq |S|.$$

So for the remainder of the proof we suppose $r\geq \mu$, which implies $$|N(S)|\geq \delta(U_r)\geq \delta(V_\mu)\geq g - |V_\mu|.$$
If $|S|\leq g-|V_\mu|\leq |N(S)|$, then we are done; so suppose that
\begin{equation}\label{Sbound}
|S|\geq g-|V_\mu|+1.
\end{equation}
%Observation \ref{l-fairfacts}.\ref{3}.

Let $v\in B$. By \eqref{Sbound} we have
\begin{align*}
d(v, S)\geq g-|B|-(|A|-|S|)=|S|-(n-g)&\geq g-|V_\mu|+1-n+g\\
&=2\ceiling{\frac{n}{2}+\frac{|V_\mu|}{2}}-(n+|V_\mu|)+1\geq 1
\end{align*}
which implies
\begin{equation}\label{BNS1}
B\subseteq N(S).
\end{equation}

\textbf{Case 1} ($\mu\leq r\leq \lambda-1$). Using $S\subseteq A$ and \eqref{BNS1}, we have 
\[|S|\leq |A| = \sum_{i=1}^r|U_i| \leq \frac{n}{2}\leq |B|\leq |N(S)|,\]
where the second inequality holds by Observation \ref{l-fairfacts}.\ref{2} (since $r\leq \lambda-1$) and the third inequality holds because $|A| + |B| = n$ and $|A| \leq \frac{n}{2}$.
%Observation \ref{l-fairfacts}.\ref{2}.

\textbf{Case 2} ($r\geq \lambda$).  
If $h = h_1$, then $|N(S)|\geq \delta(U_r)\geq \delta(V_\lambda)\geq h_1 - |V_\lambda| = \ceiling{n/2}$, which (since $S$ is independent), implies that $|N(S)|\geq |S|$.  So suppose $h=h_2$, i.e.\ $|V_\lambda| \geq \fair$.

For all $v\in S\cap U_r$ we have, 
\begin{equation}\label{rearrange}
d(v, A)\geq h_2-|U_r|-|B|> \sum_{i=1}^{r-1}\left(|U_i|-\fair-1\right),
\end{equation}
where the last inequality holds since 
\begin{align*}
h_2+(r-1)\left(\fair+1\right)&\geq h_2+(\lambda-1)\left(\fair+1\right)\\
&=\ceiling{\frac{n}{2}}+\lambda\fair+(\lambda-1)\\
&\geq \ceiling{\frac{n}{2}}+\lambda\left(\frac{\ceiling{\frac{n+1}{2}}-(\lambda-1)}{\lambda}\right)+(\lambda-1)\\
&=\ceiling{\frac{n}{2}}+\ceiling{\frac{n+1}{2}}>n=\sum_{i=1}^r|U_i|+|B|.
\end{align*}

So let $v_r\in U_r\cap S$, and note that by \eqref{rearrange} there exists some $i$ with $1\leq i\leq r-1$ such that $|N(v_r)\cap U_i|\geq |U_i|-\fair$.  Letting $v_i\in U_i\cap S$ now gives 
\begin{align*}
|N(S)|\geq |N(v_1)\cap U_i|+|N(v_i)|\geq |U_i|-\fair + h_2-|U_i|=\ceiling{\frac{n}{2}},
\end{align*}
which (since $S$ is independent) implies that $|N(S)|\geq |S|$.  
\end{proof}

\section{Robust expansion and Hamiltonian cycles}\label{sec:robust}

Let $0<\nu\leq \tau/2$ and let $G$ be a graph on $n$ vertices.  For all $S\subseteq V(G)$, let $RN_\nu(S)=\{v: d(v, S)\geq \nu n\}$ be the $\nu$-\tbf{robust neighborhood} of $S$.  If $|RN_{\nu}(S)| \geq |S| + \nu n$, we say $S$ $\nu$-\tbf{robustly expands}.  We say that $G$ is a \tbf{$(\nu, \tau)$-robust expander} if $|RN_\nu(S)|\geq |S|+\nu n$ for all $S\subseteq V(G)$ with $\tau n\leq |S|\leq (1-\tau)n$.

K\"uhn, Osthus, and Treglown \cite{KOT} proved that for sufficiently large $n$, if $G$ is a robust expander on $n$ vertices with linear mimimum degree, then $G$ has a Hamiltonian cycle.  Later this result was proved by Lo and Patel \cite{LP} without the use of Szemer\'edi's regularity lemma, thus yielding better explicit bounds on the parameters.  Note that both of these results were actually proved for directed graphs, but here we only state the result for  symmetric digraphs (i.e.\ graphs).  

\begin{theorem}[Lo, Patel \cite{LP}]\label{robustexpanderHC}
Let $n$ be a positive integer and let $0<\nu, \tau, \eta<1$ such that $4\sqrt[13]{\log^2 n/n}<\nu\leq \tau\leq \eta/16$.  If $G$ is a graph on $n$ vertices such that $\delta(G)\geq \eta n$ and $G$ is a $(\nu, \tau)$-robust expander, then $G$ has a Hamiltonian cycle.
\end{theorem}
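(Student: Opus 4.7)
The plan is to extract Hamiltonicity from robust expansion by combining the \emph{absorbing method} with a short path-cover-then-merge argument, which is the standard route for results of this shape and avoids the Szemer\'edi regularity lemma (as Lo and Patel do).

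First, I would construct an \emph{absorbing path} $P_A$ in $G$: a path on at most $\nu^{1/2} n$ vertices with the property that for every set $R \subseteq V(G)\setminus V(P_A)$ with $|R|\le \nu^2 n$, there is a path on $V(P_A)\cup R$ with the same two endpoints as $P_A$. The construction proceeds in two stages. For each vertex $v\in V(G)$, the minimum degree bound $\delta(G)\ge \eta n$ guarantees many short ``absorbers'' for $v$, that is, short paths that can be rerouted to swallow $v$ while preserving their endpoints; a counting argument shows each $v$ has at least, say, $\eta^3 n^2$ such absorbers. Picking a random collection of these, every vertex is covered by many absorbers with high probability, and then robust expansion is used to string the chosen absorbers together into one path via a small reservoir, connecting consecutive absorbers by bridging paths of length $O(1)$ found through $RN_\nu(\cdot)$.

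Next, in $G':= G - V(P_A)$ I would use robust expansion to find a path cover with only $O(1/\nu)$ paths. One way is to take a minimum-size path cover and show that any two endpoints lying in different paths can, via robust expansion, be connected through a short path inside $G'$, contradicting minimality unless the cover is already short. Then merge these paths iteratively: at each step, pick two paths $P, Q$, and use the fact that the endpoints of $P$ together with the endpoints of $Q$ each have robust neighborhoods of size $\ge \tau n + \nu n$, hence can be joined by a short internally-disjoint bridging path in $G'$ avoiding all vertices currently used. After $O(1/\nu)$ merges, one obtains a single path $P^\ast$ in $G'$ covering all but a remainder $R$ with $|R|\le \nu^2 n$. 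Attach $P^\ast$ to $P_A$ by one more bridging step through robust neighbors, apply the absorbing property of $P_A$ to swallow $R$, and finally close the resulting Hamiltonian path into a cycle by a last application of robust expansion to the two endpoints (or by building ``closing'' into the absorbing path from the outset, using a cyclic version of the gadgetry).

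The main obstacle I expect is the parameter balance in the absorbing construction: the absorbing path must be short enough that its removal does not destroy the expansion of the remainder, yet rich enough in absorbers to handle a leftover of size up to $\nu^2 n$. The hypothesis $4\sqrt[13]{\log^2 n / n} < \nu \le \tau \le \eta/16$ is exactly what allows the probabilistic selection of absorbers and the greedy construction of bridging paths to go through simultaneously, and checking this will be the most delicate part. A secondary technical point is that the merging/bridging step must avoid $O(n)$ previously-used vertices, which requires that after deleting the forbidden set, the residual graph still inherits $(\nu', \tau')$-robust expansion for slightly weakened parameters; this inheritance is a standard but careful computation using only the definition of $RN_\nu$ and the minimum degree.
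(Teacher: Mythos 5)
This statement is an external result that the paper cites and uses as a black box (Theorem~4.1, attributed to Lo and Patel); the paper does not contain a proof of it, so there is no internal argument to compare yours against. I can only assess your sketch on its own merits, and there is a genuine gap at the heart of it.

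Your absorber construction is not justified. You claim that the minimum degree hypothesis $\delta(G)\ge\eta n$ alone furnishes, for each vertex $v$, on the order of $\eta^3 n^2$ short absorbers. This is false in general: the standard ``edge absorber'' for $v$ is an edge $xy$ of the absorbing path with $x,y\in N(v)$, and the number of such edges is governed by $e(G[N(v)])$, which minimum degree does not control (a graph of minimum degree $\eta n$ can have an independent neighborhood $N(v)$). More elaborate path-absorbers likewise require some local edge density or a connectivity/expansion property inside and between neighborhoods, not just a degree lower bound. To make the count go through you must invoke robust expansion at this very step — roughly, that $N(v)$ robustly expands and hence supports many short paths between its vertices — and this is precisely the kind of work the hypothesis $4\sqrt[13]{\log^2 n/n}<\nu\le\tau\le\eta/16$ is calibrated for. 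As written, the foundation of the absorbing path is built on a statement that doesn't hold. The rest of the outline (short path cover via expansion, merging through robust neighborhoods, inheritance of $(\nu',\tau')$-expansion after deleting a small vertex set, and closing the cycle) is plausible at the level of a sketch and is in the spirit of Lo and Patel's regularity-free approach, but the absorber step needs to be rebuilt so that it genuinely uses robust expansion and not only the degree condition.

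A secondary, smaller point: Lo and Patel prove the result for robustly expanding \emph{digraphs}, and the undirected statement used here is a specialization. A faithful reconstruction should either work with out-/in-neighborhoods throughout, or explicitly note that the undirected case follows by passing to the symmetric orientation; your sketch implicitly assumes the undirected setting without remarking on this.
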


\subsection{A simpler way to check that $G$ is a robust expander}

The main result of this subsection is to prove a general lemma which reduces the problem of checking that $G$ is a robust expander to the problem of showing that nearly independent sets robustly expand (c.f. Fact \ref{lem:indall}) and that $G$ has no sparse cuts.  Importantly, this will give us useful information in the case when $G$ is not a robust expander.  

First we establish a simple fact which roughly allows us to translate a bound on the  minimum degree to a bound on the size of a robust neighborhood.

\begin{fact}\label{robustdegree}
Let $\nu>0$, let $G$ be a graph on $n$ vertices, and let $S\subseteq V(G)$ with $|S| \geq (\sqrt{\nu}+\nu)n$.  Then
\begin{enumerate}
\item $|RN_{\nu}(S)| \geq \delta(S) - \sqrt{\nu}n$, and 
\item if there exists $S' \subseteq S$ with $|S'| \geq (\sqrt{\nu}+\nu) n$ such that $\delta(S')\geq |S|+(\sqrt{\nu}+\nu)n$, then
$|RN_{\nu}(S)| \geq |S| + \nu n $.
\end{enumerate}
\end{fact}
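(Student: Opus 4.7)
The plan is to prove both parts by the standard double-counting trick relating the minimum degree on $S$ to the number of edges leaving a robust neighborhood. For part (1), I would lower-bound $\sum_{v \in S} d(v)$ by $|S|\delta(S)$ and then split this degree sum according to whether neighbors of vertices of $S$ lie inside or outside $RN_\nu(S)$. Vertices of $V \setminus RN_\nu(S)$ send fewer than $\nu n$ edges into $S$ by definition of the robust neighborhood, so their total contribution is at most $\nu n (n - |RN_\nu(S)|) \leq \nu n^2$. Vertices of $RN_\nu(S)$ contribute at most $|S| \cdot |RN_\nu(S)|$ (each can be adjacent to all of $S$). Combining gives $|S|\delta(S) \leq |S|\,|RN_\nu(S)| + \nu n^2$, hence $|RN_\nu(S)| \geq \delta(S) - \nu n^2/|S|$. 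The hypothesis $|S| \geq (\sqrt{\nu}+\nu)n \geq \sqrt{\nu}\,n$ then yields $\nu n^2 / |S| \leq \sqrt{\nu}\,n$, which is exactly the bound we want.

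For part (2), I would apply part (1) to $S'$ rather than $S$. Since $|S'| \geq (\sqrt{\nu}+\nu)n$, part (1) gives
\[ |RN_\nu(S')| \geq \delta(S') - \sqrt{\nu}\,n \geq |S| + (\sqrt{\nu}+\nu)n - \sqrt{\nu}\,n = |S| + \nu n. \]
Because $S' \subseteq S$, every vertex $v$ with $d(v,S') \geq \nu n$ automatically satisfies $d(v,S) \geq \nu n$, so $RN_\nu(S') \subseteq RN_\nu(S)$. Chaining these two inequalities yields $|RN_\nu(S)| \geq |S| + \nu n$.

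There is no real obstacle here: both statements reduce to a one-line counting argument once the right degree sum is written down, and part (2) is essentially a monotonicity observation layered on top of part (1). The only thing to be careful about is making sure the hypothesis $|S| \geq (\sqrt{\nu}+\nu)n$ is used in exactly the right place — namely to absorb the slack $\nu n^2/|S|$ into $\sqrt{\nu}\,n$ in part (1) and to make the application of part (1) to $S'$ legal in part (2).
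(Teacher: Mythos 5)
Your proof is correct and takes essentially the same double-counting approach as the paper. The only cosmetic difference is in part (1): you bound $\nu n(n-|RN_\nu(S)|)\leq \nu n^2$ up front and then divide by $|S|\geq\sqrt{\nu}\,n$, whereas the paper solves the inequality exactly for $|RN_\nu(S)|$ before estimating; both routes yield the identical final bound $\delta(S)-\sqrt{\nu}\,n$, and part (2) is the same monotonicity observation as in the paper.
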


\begin{proof}
(i) For all $v\in RN_{\nu}(S)$, $\nu n\leq d(v, S)\leq |S|$, and for all $v\in V(G)\setminus RN_{\nu}(S)$, $d(v, S)<\nu n$.  Thus
\[ |S|\delta(S) \leq \sum_{v \in S} d(v) = \sum_{v \in V(G)} d(v, S) \leq |RN_{\nu}(S)||S| + \nu n(n - |RN_{\nu}(S)|). \]
Solving for $RN_{\nu}(S)$, we have
\begin{align*}
|RN_{\nu}(S)| \geq \frac{|S|\delta(S) - \nu n^2}{|S| - \nu n} = \delta(S) + \frac{\nu n \delta(S) - \nu n^2}{|S| - \nu n} 
\geq \delta(S) - \frac{n - \delta(S)}{\sqrt{\nu}n} \nu n \geq \delta(S) - \sqrt{\nu} n.
\end{align*}

(ii) Using the fact that $RN_\nu(S')\subseteq RN_\nu(S)$, we apply part (i) to get
\[ |RN_{\nu}(S)| \geq |RN_{\nu}(S')| \geq \delta(S') - \sqrt{\nu}n \geq |S| + \nu n. \qedhere\]
\end{proof}

\begin{lemma}\label{robustorindep}
Let $0<\nu \leq \tau^2\leq 1/4$ and let $G$ be a graph on $n$ vertices. If
\begin{enumerate}
\item $e(V(G)\setminus B, B) \geq 2\tau^2 n^2$ for all $B \subseteq V(G)$ with $\tau n\leq |B|\leq (1-\tau)n$ and
\item $|RN_{\nu}(S')| \geq |S'| + \nu n$ for all $S' \subseteq V(G)$ with $\tau^2 n \leq |S'| \leq (1-\tau^2) n$ and $\Delta(G[S']) < \nu^2 n$,
\end{enumerate}
then $G$ is a $(\nu^2, \tau)$-robust expander.
\end{lemma}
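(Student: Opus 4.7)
My plan is to mimic the proof of Fact~\ref{lem:indall} by reducing to hypothesis~(ii) via a ``nearly independent'' subset of $S$. Given $S$ with $\tau n \leq |S| \leq (1-\tau)n$, I will set
\[
S' := \{v \in S : d(v,S) < \nu^2 n\}.
\]
This choice ensures $\Delta(G[S']) < \nu^2 n$ automatically, and every vertex of $S \setminus S'$ lies in $RN_{\nu^2}(S)$ by definition. Since $|S'| \leq |S| \leq (1-\tau)n \leq (1-\tau^2)n$, hypothesis~(ii) applies to $S'$ exactly when $|S'| \geq \tau^2 n$, so I will split the argument into two cases.

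In the main case $|S'| \geq \tau^2 n$, hypothesis~(ii) gives $|RN_\nu(S')| \geq |S'| + \nu n$, and since $\nu \geq \nu^2$ and $S' \subseteq S$, one checks $RN_\nu(S') \subseteq RN_{\nu^2}(S)$. To combine $RN_\nu(S')$ with $S \setminus S'$ I need to control the overlap $(S \setminus S') \cap RN_\nu(S')$. The key observation is that for every $u \in S'$ we have $d(u, S \setminus S') \leq d(u,S) < \nu^2 n$, which bounds $e(S', S \setminus S')$ by $|S'|\nu^2 n$, and hence bounds the overlap by $\nu|S'|$. Inclusion--exclusion will then yield
\[
|RN_{\nu^2}(S)| \;\geq\; (|S'| + \nu n) + |S \setminus S'| - \nu|S'| \;=\; |S| + \nu(n - |S'|) \;\geq\; |S| + \nu \tau n \;\geq\; |S| + \nu^2 n,
\]
using $|S'| \leq (1-\tau)n$ and $\tau \geq \nu$ (which follows from $\nu \leq \tau^2 \leq \tau$).

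The main obstacle is the remaining small-$S'$ case $|S'| < \tau^2 n$, where hypothesis~(ii) is unavailable. Here I will instead invoke hypothesis~(i) with $B = S$ (permissible because $\tau n \leq |S| \leq (1-\tau)n$) to obtain $e(V \setminus S, S) \geq 2\tau^2 n^2$. A simple averaging argument on degrees of vertices in $V \setminus S$ shows that $X := \{v \in V \setminus S : d(v,S) \geq \nu^2 n\}$ satisfies $|X| \geq (2\tau^2 - \nu^2)n$, which exceeds $(\tau^2 + \nu^2)n$ since $\nu^2 \leq \tau^4 \leq \tau^2/4$ (using $\tau^2 \leq 1/4$). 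Because $X \subseteq V \setminus S$ and $S \setminus S' \subseteq S$ are disjoint subsets of $RN_{\nu^2}(S)$, I will conclude
\[
|RN_{\nu^2}(S)| \;\geq\; |X| + |S \setminus S'| \;\geq\; (\tau^2 + \nu^2)n + (|S| - \tau^2 n) \;=\; |S| + \nu^2 n,
\]
completing the verification that $G$ is a $(\nu^2,\tau)$-robust expander.
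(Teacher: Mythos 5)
Your proof is correct and takes essentially the same approach as the paper: both arguments decompose $S$ into $S' = S \setminus RN_{\nu^2}(S)$ (the vertices with few $S$-neighbours) and $T = S \cap RN_{\nu^2}(S)$, then split on whether $|S'| \geq \tau^2 n$, invoking hypothesis (ii) on $S'$ in the large case and hypothesis (i) on $S$ in the small case, with an averaging bound on $|RN_\nu(S') \cap T|$ to finish the large case. The only difference is cosmetic: the paper argues by contradiction from $|RN_{\nu^2}(S)| < |S| + \nu^2 n$, while you give a direct lower bound on $|RN_{\nu^2}(S)|$ in each case, which makes the numerics slightly cleaner but does not change the substance.
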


\begin{proof}
Suppose (i) and (ii) and let $S\subseteq V(G)$ with $\tau n\leq |S|\leq (1-\tau)n$.  Set $R':=RN_{\nu^2}(S)\setminus S$, $S':=S\setminus RN_{\nu^2}(S)$, and $T:=S\cap RN_{\nu^2}(S)$.  Suppose for contradiction that $$|RN_{\nu^2}(S)|<|S|+\nu^2n,$$ which with the defintions above implies  
\begin{equation}\label{R'}
|R'|<|S'|+\nu^2 n.  
\end{equation}

If $|S'|<\tau^2 n$, then \eqref{R'} implies
\begin{align*}
e(S, V(G)\setminus S) < (n - |S| - |R'|)\nu^2 n + |R'||S| <(n-|S|)\nu^2 n+(\tau^2 +\nu^2)n|S|\leq 2\tau^2 n^2,
\end{align*}
contradicting (i).

So suppose $|S'|\geq \tau^2 n$.  Note that $\tau^2 n \leq |S'|\leq |S| \leq (1-\tau)n\leq (1-\tau^2)n$  and $RN_{\nu}(S')\subseteq RN_{\nu^2}(S)$.  By the definition of $S'$ we have $d(v, S)<\nu^2 n$ for all $v\in S'$, and thus $e(S', T)<\nu^2 n|S'|\leq \nu^2 (1-\tau) n^2$.  So by averaging, 
\begin{equation}\label{RS'}
|RN_{\nu}(S')\cap T|< \nu (1-\tau) n.  
\end{equation}
By the definition of $S'$ we have $\Delta(G[S'])< \nu^2 n$, which by (ii), \eqref{R'}, and \eqref{RS'} implies
\begin{align*}
|S'|+\nu n\leq |RN_{\nu}(S')| \leq |R'| + |RN_{\nu}(S')\cap T| < |S'|+\nu^2 n + \nu(1-\tau) n,
\end{align*}
a contradiction.
\end{proof}

\subsection{Proof of Theorem \ref{l-boundedHC}}

To prove Theorem \ref{l-boundedHC}, we show that a $k$-partite graph satisfying the degree condition is either a robust expander (in which case we can apply Theorem \ref{robustexpanderHC}), or has a very specific extremal structure (in which case we can construct the Hamiltonian cycle directly).  We will make this precise below.

Since the results of this section are asymptotic, we ignore floors and ceilings whenever possible.  Formally, this means we will redefine $\Phi$ for the purposes of this section as follows.  For all $P=(n_1, n_2, \dots, n_k)\in \cP_k(n)$, let
\begin{itemize}
\item $f_i(P)=|V_i|+\sum_{j=1}^i|V_j|~ \text{ and } ~f(P)=\max_{i\in [\mu-1]}f_i(P) ,$

\item $g(P)=\frac{n}{2}+\frac{|V_\mu|}{2} ,$

\item $h_1(P)=\frac{n}{2}+|V_\lambda|,~ h_2(P)=\frac{n}{2}+\frac{n}{2\lambda}, \text{ and } h(P)=\min\{h_1(P), h_2(P)\} ,$

\item $\Phi(P)=\max\{f(P), g(P), h(P)\} .$
\end{itemize}
Note that we do not modify the definitions of $\lambda$ and $\mu$.

In light of Lemma \ref{robustorindep} and the plan set forth above, we first check that $G$ has no sparse cuts; i.e.\ condition (i) of Lemma \ref{robustorindep} is satisfied.  Note that we are able to show that a weaker degree condition suffices for this purpose.

\begin{proposition}\label{nosparsecuts}
Let $n\geq k \geq 2$ and $0 < \tau \leq \frac{\gamma}{4}$.  If $G=(V_1\sqcup \dots \sqcup V_k, E)$ is a $k$-partite graph on $n$ vertices with $n/2\geq |V_1|\geq \dots\geq |V_k|$ such that \[ \delta(V_i) \geq \max\left\{f(|V_1|, \dots, |V_k|), g(|V_1|, \dots, |V_k|)\right\} +\gamma n-|V_i| ~\text{ for all } i\in [k],\]
then for all $B \subseteq V(G)$ with $\tau n \leq |B| \leq (1-\tau)n$,
\[ e(V(G)\setminus B, B) \geq 2\tau^2 n^2 .\]
\end{proposition}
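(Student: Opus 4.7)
The plan is to directly count the cut via vertex degrees, then control the unavoidable "same-part correction" using a Jensen-type inequality, and finally split into cases based on $|B|$.

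First I would establish the main edge-counting bound. For each $v\in V_i\cap B$, the neighbors of $v$ lie outside $V_i$, so at most $|B|-b_i$ of them can lie in $B\setminus V_i$, where $b_i=|V_i\cap B|$ and $c_i=|V_i\setminus B|$. Hence $d_G(v,V\setminus B)\geq \delta(V_i)-(|B|-b_i)\geq \max\{f,g\}+\gamma n-|V_i|-|B|+b_i$. Summing over $v\in B$ and rearranging gives
\[e(V\setminus B,B)\ \geq\ |B|\bigl(\max\{f,g\}+\gamma n-|B|\bigr)-\sum_i b_ic_i,\]
and the symmetric bound with $n-|B|$ replacing $|B|$.

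Next I would control the correction $\sum_i b_ic_i$. Writing $b_i=\beta_i|V_i|$ and weighting parts by $w_i=|V_i|/n$, the sum equals $n^2\,E_w[\beta(1-\beta)|V_i|/n]\leq v_1\,n^2\,E_w[\beta(1-\beta)]$, where $v_1=|V_1|/n$. Since $\beta(1-\beta)$ is concave, Jensen gives $E_w[\beta(1-\beta)]\leq E_w[\beta]\bigl(1-E_w[\beta]\bigr)=\beta(1-\beta)$ with $\beta=|B|/n$, yielding
\[\sum_i b_ic_i\ \leq\ \frac{|V_1|}{n}\,|B|\,(n-|B|)\ \leq\ \tfrac12\min\{|B|,n-|B|\}\cdot n,\]
since $|V_1|\leq n/2$.

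Finally I would close the argument by case analysis on $|B|$. \emph{Case~(a):} $\min\{|B|,n-|B|\}\leq \gamma n/4$. Then by symmetry assume $|B|\leq \gamma n/4$. Each $v\in V_i\cap B$ has $d_G(v,V\setminus B)\geq g+\gamma n-|V_i|-|B|\geq \gamma n-|B|\geq 3\gamma n/4$, so $e\geq |B|\cdot 3\gamma n/4\geq 3\tau\gamma n^2/4\geq 3\tau^2 n^2>2\tau^2 n^2$, using $\tau\leq \gamma/4$. \emph{Case~(b):} $\min\{|B|,n-|B|\}\geq \gamma n/4$. Averaging the two edge-count bounds and invoking $g=n/2+|V_\mu|/2$ reduces the problem to showing
\[\tfrac{\gamma n^2}{2}+\tfrac{|V_\mu|n}{4}-\tfrac{(1-v_\mu)n^2}{4}+|B|(n-|B|)-\sum_i b_ic_i\ \geq\ 2\tau^2 n^2.\]
When $\mu=1$ we have $|V_\mu|=|V_1|$, the negative and positive $v_1$-terms cancel, and the bound reduces cleanly to $\gamma n^2/2\geq 2\tau^2 n^2$. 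When $\mu\geq 2$, I would use the extra information $\max\{f,g\}\geq f_1=2|V_1|$ in the raw bound before averaging, which replaces the term $g-|V_1|$ by something $\geq |V_1|$ and again produces a surplus of order $\gamma n^2$.

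The main obstacle I anticipate is the middle regime $|B|\approx n/2$ with $\mu\geq 2$ and $|V_1|-|V_\mu|$ large, where the naive sum $|B|(\Phi_0+\gamma n-|B|)-\sum_i b_ic_i$ can pick up a spurious negative contribution from the biggest part $V_1$. The remedy is to revert to the stronger bound $e\geq\sum_i b_i\max\{0,\delta(V_i)-|B|+b_i\}$, observing that all parts $V_i$ with $|V_i|\leq \Phi_0+\gamma n-|B|$ automatically contribute positively; since the "bad" parts (if any) have $|V_i|>\gamma n+|V_\mu|/2$ and thus number at most $O(1/\gamma)$, the summed good-part contribution already exceeds $2\tau^2 n^2$ once $\tau\leq\gamma/4$.
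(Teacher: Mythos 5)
There is a genuine gap in the Case~(b) argument. Averaging the two edge-count bounds and applying Jensen to $\sum_i b_ic_i$ produces a lower bound that can be negative whenever $|V_1|$ is significantly larger than $|V_\mu|$: indeed, the un-averaged $B$-side bound together with Jensen gives $e/n^2\geq \beta\bigl(\phi_0+\gamma-v_1-\beta(1-v_1)\bigr)$ with $\beta=|B|/n$, $\phi_0=\max\{f,g\}/n$, $v_1=|V_1|/n$, and at $\beta=1/2$ this requires $\phi_0\geq(1+v_1)/2$, which is \emph{false} when $\mu\geq 2$. For a concrete instance, take $|V_1|=0.3n$, $|V_2|=|V_3|=0.1n$ and the remaining $0.5n$ split into tiny parts: then $\mu=3$, $g=0.55n$, $f_1=0.6n$, so $\Phi_0=0.6n<0.65n=(n+|V_1|)/2$, and at $|B|=n/2$ your Jensen bound gives $e/n^2\geq 0.5(\gamma-0.05)$, which is negative for $\gamma<0.05$. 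The averaged version is weaker still (the $A$-side term is vacuous in this regime), and using $f_1=2|V_1|$ does not help precisely because $\mu\geq 2$ forces $|V_1|<n/3$, so $2|V_1|<(n+|V_1|)/2$.

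Your anticipated ``remedy'' --- passing to $e\geq \sum_i b_i\max\{0,\delta(V_i)-|B|+b_i\}$ and arguing the good-part contribution is large --- is the right move, but the claim that ``the summed good-part contribution already exceeds $2\tau^2 n^2$'' is unjustified and is exactly the missing content of the proof. It only works when $\sum_{\text{good}} b_i\geq \tau n$; when $B$ is concentrated in the few large parts with $c_i>\Phi_0+\gamma n-|B|$, the good parts can contribute arbitrarily little. The paper handles precisely this situation by defining $j$ to be the largest index with $|B|+|A_j|>\max\{f,g\}+\tau n$ and then \emph{switching viewpoint}: if $\sum_{i>j}|B_i|\geq\tau n$ the good-part contribution suffices (essentially your bound), but if not, then either $j<\mu$ gives a contradiction via $f_j\geq |V_j|+\sum_{i\leq j}|V_i|$, or $j\geq\mu$ and one counts edges leaving $A_j$ into $B$ instead, using that every vertex of $A_j$ has at least $\gamma n$ neighbors in $B$. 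Your sketch contains neither the contradiction step nor the $A$-side counting, so the proof as proposed is incomplete.
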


\begin{proof}
Let $B\subseteq V(G)$ with $\tau n\leq |B|\leq (1-\tau)n$, let $A := V(G)\setminus B$, $B_i := B \cap V_i$, and $A_i := A \cap V_i$.  Without loss of generality, suppose $|B| \leq \frac{n}{2}$.  Let $j$ be the largest index such that $$|B|+|A_j|>\max\left\{f, g\right\} +\tau n;$$ or if no such index exists, set $j=0$.  

If $j\geq \mu$, then since $|B|\leq n/2$ we have 
\begin{equation}\label{**}
|A_j| > g - |B| \geq \frac{|V_\mu|}{2} + \tau n, 
\end{equation}
and since $|V_j|\leq |V_\mu|$, we have 
\begin{equation}\label{*}
|A|+|B_j|= n+|V_j|-(|B|+|A_j|)<n+|V_j|-g \leq \frac{n}{2}+\frac{|V_\mu|}{2} = g.
\end{equation}
Thus 
\begin{align*}
e(A,B)\geq e(A_j, B)&\geq |A_j|\left(g+\gamma n-|V_j|-(|A|-|A_j|)\right)\\
&=|A_j|\left(g+\gamma n-(|A|+|B_j|)\right)\stackrel{\eqref{*}}{>} |A_j|\gamma n \stackrel{\eqref{**}}{>} \tau \gamma n^2 \geq 2\tau^2n^2.
\end{align*}

So suppose $0\leq j\leq \mu-1$, which by the definition of $j$ implies that 
\begin{equation}\label{***}
|B|+|A_i|\leq g + \tau n \text{ for all } i\geq j+1.  
\end{equation}
So if $\sum_{i=j+1}^k|B_i|=|B|-\sum_{i=1}^j|B_i|\geq \tau n$ (note that this necessarily holds when $j=0$), then  
\begin{align*}
e(A,B)= \sum_{i=j+1}^ke(A, B_i)&\geq \sum_{i=j+1}^k|B_i|\left(g+\gamma n-|V_i|-(|B|-|B_i|)\right)\\
&= \sum_{i=j+1}^k|B_i|\left(g+\gamma n-(|B|+|A_i|)\right)\\
&\stackrel{\eqref{***}}{\geq} \tau \frac{\gamma}{2} n^2 \geq 2\tau^2 n^2.
\end{align*}
Otherwise, we have $1\leq j\leq \mu-1$ and $|B|-\tau n< \sum_{i=1}^j|B_i|\leq \sum_{i=1}^j|V_i|$ which implies
\begin{align*}
|A_j|> f_j + \tau n - |B| = |V_j|+\sum_{i=1}^j|V_i|+\tau n-|B|> |V_j|,
\end{align*}
a contradiction.
\end{proof}

We now show that either all nearly independent sets robustly expand (c.f. Theorem \ref{warmup}); i.e.\ condition (ii) of Lemma \ref{robustorindep} is satisfied, or else we are in an extremal case.  We say that a $k$-partite graph $G$ is \tbf{$\nu$-extremal} if there exists $S\subseteq V(G)$ such that $\Delta(G[S]) < \nu^2 n$ and 
\begin{equation}\label{extremalS}
\frac{n}{2} - \nu n \leq |S| \leq  \sum_{i:V_i\cap S\neq \emptyset}|V_i|\leq \frac{n}{2}.
\end{equation}

\begin{proposition}\label{degcondrobust}
Let $n\geq k\geq 2$ and $\gamma, \tau, \nu, n_0$ such that $0 \leq \frac{1}{n_0}\ll\nu\leq \min\{\frac{4}{(k-1)^2}, \frac{\gamma^2}{4}, \frac{\tau^4}{4k^2}\}$.  If  $G=(V_1\sqcup \dots \sqcup V_k, E)$ is a $k$-partite graph on $n \geq n_0$ vertices with $n/2\geq |V_1|\geq \dots\geq |V_k|$ such that
\[ \delta(V_i) \geq \Phi(|V_1|, \dots, |V_k|) + \gamma n - |V_i| ~\text{ for all } i\in [k],\]
then $|RN_{\nu}(S)| \geq |S| + \nu n$ for all $S\subseteq V(G)$ with $\tau^2 n \leq |S| \leq (1-\tau^2)n$ and $\Delta(G[S]) < \nu^2 n$, or else $G$ is $\nu$-extremal.
\end{proposition}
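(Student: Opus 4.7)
The plan is to mirror the case analysis in the proof of Theorem~\ref{warmup}, but use Fact~\ref{robustdegree}(ii) to upgrade neighborhood bounds to robust neighborhood bounds: the $\gamma n$ slack in the hypothesis supplies the room required by the $\sqrt{\nu}+\nu$ error terms in Fact~\ref{robustdegree}(ii), and the $\nu$-extremal conclusion will emerge naturally from the ``$g$'' regime. Let $S$ satisfy the hypothesis, and, following Theorem~\ref{warmup}, relabel the parts as $U_1, \dots, U_k$ so that $A = U_1 \sqcup \cdots \sqcup U_r$ consists of the parts meeting $S$ (ordered by decreasing size) and $B = U_{r+1} \sqcup \cdots \sqcup U_k$ is the rest; write $s_i = |S \cap U_i|$. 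Split into three cases based on $r$: $1 \leq r \leq \mu - 1$, $\mu \leq r \leq \lambda - 1$, and $r \geq \lambda$.

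In the first two cases, choose $i^* \in [r]$ to be the largest index with $\sum_{j=i^*}^r s_j \geq (\sqrt{\nu}+\nu)n$; such an $i^*$ exists because the parameter hypothesis $\nu \leq \tau^4/(4k^2)$ forces $\tau^2 \geq \sqrt{\nu}+\nu$, so $|S| \geq \tau^2 n \geq (\sqrt{\nu}+\nu)n$. Set $S' := S \cap (U_{i^*} \cup \cdots \cup U_r)$. Then $|S'| \geq (\sqrt{\nu}+\nu)n$ and $\delta(S') \geq \Phi - |U_{i^*}| + \gamma n$, while the maximality of $i^*$ also forces $|S| \leq \sum_{j=1}^{i^*}|U_j| + (\sqrt{\nu}+\nu)n$. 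When $i^* \leq \mu - 1$, using $\Phi \geq f_{i^*} \geq |U_{i^*}| + \sum_{j=1}^{i^*}|U_j|$ (with $|U_j| \leq |V_j|$) yields $\delta(S') \geq |S| + (\sqrt{\nu}+\nu)n$ after absorbing error terms via $\nu \leq \gamma^2/4$, and Fact~\ref{robustdegree}(ii) produces the desired robust expansion. When $i^* \geq \mu$ (forcing $r \geq \mu$), one instead uses $\Phi \geq g$ together with $|U_{i^*}| \leq |V_\mu|$; this works whenever $|S|$ is bounded away from $n/2$ by $\Omega(\gamma n)$. In the remaining regime ($r \geq \mu$ with $|S|$ close to $n/2$), a direct computation from the $g$ bound shows $d(v,S) \geq \nu n$ for every $v \in B$, so $B \subseteq RN_\nu(S)$; then, using $r \leq \lambda - 1$ together with Observation~\ref{l-fairfacts}.\ref{2} to get $|A| \leq n/2$, either $|B| \geq |S| + \nu n$ directly, or $\tfrac{n}{2} - \nu n \leq |S| \leq |A| \leq \tfrac{n}{2}$, which (together with $\Delta(G[S]) < \nu^2 n$) is exactly the $\nu$-extremal condition~\eqref{extremalS}.

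For $r \geq \lambda$ one uses the $h$ bound. When $h = h_1 = n/2 + |V_\lambda|$, every $v \in U_r$ has $d(v) \geq n/2 + \gamma n$ (since $|U_r| \leq |V_\lambda|$), so the same $i^*$ trick and Fact~\ref{robustdegree}(ii) handle every $|S|$ not too close to $n/2$, and the remaining large-$|S|$ regime is treated by showing $B \subseteq RN_\nu(S)$ and extracting extremality just as above. When $h = h_2$, we must adapt the two-vertex argument at the end of the proof of Theorem~\ref{warmup}: if no single $S \cap U_i$ is large enough to apply Fact~\ref{robustdegree}(ii) directly, select vertices $v_r, v_i \in S$ in two different parts of $A$ and show that their $\gamma n$-boosted neighborhoods together cover more than $|S| + \nu n$ vertices robustly. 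I expect this final $h_2$ sub-case to be the main obstacle, since the one-edge counting argument of Theorem~\ref{warmup} must be carefully promoted to a \emph{robust} neighborhood bound while simultaneously tracking the $\gamma n$, $\sqrt{\nu} n$, and $\nu^2 n$ error terms.
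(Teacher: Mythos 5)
Your high-level plan (robustify the case analysis of Theorem~\ref{warmup}, obtain the extremal case from the middle regime) is on the right track, and the broad outlines of the first two cases are sound modulo constants, but there are two genuine gaps, both traceable to a single structural choice you make differently from the paper.

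The key difference is your definition of $r$: you take $A$ to be the union of \emph{all} parts meeting $S$, whereas the paper's $r$ is chosen so that $|U_i\cap S|\ge \tfrac{\tau^2}{k}n$ for $i\le r$ and $|U_i\cap S|<\tfrac{\tau^2}{k}n$ for $i>r$. The paper's choice guarantees that \emph{every} $S_i=U_i\cap S$ with $i\le r$ has size at least $(\sqrt{\nu}+\nu)n$, so Fact~\ref{robustdegree}(i) can be applied to each $S_i$ individually. Your $i^*$ device patches this in situations where a single large tail set $S'$ suffices --- which is exactly the $f$- and $g$-regimes, where you apply Fact~\ref{robustdegree}(ii) once --- but it does not resolve the $h_2$ sub-case. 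There the paper needs \emph{two} disjoint linear-size pieces in \emph{different} parts: it applies Fact~\ref{robustdegree}(i) to $S_1$ to locate an index $i$ with $|RN_\nu(S_1)\cap U_i|$ large, and then applies Fact~\ref{robustdegree}(i) again to $S_i$; adding $|RN_\nu(S_1)\cap U_i|+|RN_\nu(S_i)|$ (these are disjoint because $RN_\nu(S_i)\cap U_i=\emptyset$) gives $|RN_\nu(S)|\ge(1+\nu)n/2$. Your proposed fix --- ``select vertices $v_r, v_i\in S$ in two different parts'' --- cannot work: robust neighborhoods are defined with respect to sets of linear size, and a single vertex's neighborhood contributes nothing to $RN_\nu(S)$. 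So the $h_2$ sub-case, which you flag as the main obstacle, really is a hole, and the fix is to adopt the paper's threshold definition of $r$; with that in place, every $S_i$ for $i\le r$ is already usable and the $i^*$ device becomes unnecessary.

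Two smaller issues. First, for $r\ge\lambda$ your plan is to handle large $|S|$ by showing $B\subseteq RN_\nu(S)$ and ``extracting extremality just as above,'' but the extremal condition \eqref{extremalS} requires $\sum_{i:V_i\cap S\ne\emptyset}|V_i|\le n/2$, and Observation~\ref{l-fairfacts}.\ref{2} only gives $|A|\le n/2$ when $r\le\lambda-1$; for $r\ge\lambda$ you have no such bound. The paper sidesteps this entirely: in its Case 2 it proves $|RN_\nu(S)|\ge(1+\nu)n/2$ directly, which by the observation \eqref{RN>1/2} (using $S\cap RN_\nu(S)=\emptyset$) finishes for \emph{all} $|S|$, so the extremal case never arises when $r\ge\lambda$. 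Second, a minor arithmetic point: your absorption step needs $\gamma\ge 2(\sqrt{\nu}+\nu)$, which the stated constraint $\nu\le\gamma^2/4$ does not quite supply (it gives $\sqrt{\nu}\le\gamma/2$, hence $2\sqrt{\nu}\le\gamma$, leaving no room for the $2\nu$). The paper's use of Fact~\ref{robustdegree}(i) only needs $\gamma\ge\sqrt{\nu}+\nu$, which does follow. Your version is easily repaired by tightening $\nu$, but it is worth noticing that the one-sided Fact~(i) route is more efficient with the slack.
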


\begin{proof}
Let $S\subseteq V(G)$ with $\tau^2 n \leq |S| \leq (1-\tau^2)n$ and $\Delta(G[S]) < \nu^2 n$ which implies $S\cap RN_\nu(S)=\emptyset$.  Since $S\cap RN_\nu(S)=\emptyset$, we have $|S|\leq n-|RN_\nu(S)|$ and thus
\begin{equation}\label{RN>1/2}
\text{if } |RN_\nu(S)|\geq (1+\nu)\frac{n}{2}, \text{ then } |RN_\nu(S)|\geq |S|+\nu n.
\end{equation}

Relabel the parts as $U_1, \dots, U_k$ and define $r$ so that $|U_i \cap S| \geq \frac{\tau^2}{k} n$ for $1 \leq i \leq r$ and $|U_i \cap S| < \frac{\tau^2}{k} n$ for $r+1 \leq i \leq k$, and $|U_1| \geq \cdots \geq |U_r|$ and $|U_{r+1}| \geq \cdots \geq |U_k|$.  Let $A := U_1 \cup \cdots \cup U_r$, $B := U_{r+1} \cup \cdots \cup U_k$, and $S_i := U_i \cap S$.

Note that $|S_r| \geq \frac{\tau^2}{k} n\geq (\nu+\sqrt{\nu})n$, 
so by Fact \ref{robustdegree}.(ii) we have 
\begin{equation}\label{RNS_r}
|RN_{\nu}(S)| \geq \delta(S_r)-\sqrt{\nu}n\geq \delta(U_r)-\sqrt{\nu}n.
\end{equation}

If $1\leq r\leq \mu-1$, then $|S|\leq \sum_{i=1}^r|U_i|\leq \sum_{i=1}^r|V_i|$, which by \eqref{RNS_r} implies 
\begin{align*}
|RN_\nu(S)|&\geq \delta(U_r)-\sqrt{\nu}n\\
&\geq \delta(V_r)-\sqrt{\nu}n
\geq f_r - |V_i| + \gamma n \geq \sum_{i=1}^r|V_i|+\nu n
\geq |S|+\nu n.
\end{align*}

So for the remainder of the proof we suppose $r\geq \mu$ which implies 
\[
|RN_{\nu}(S)| \geq \delta(U_r)-\sqrt{\nu}n\geq \delta(V_\mu)-\sqrt{\nu n}\geq g - |V_\mu| + \gamma n \geq \frac{n}{2}-\frac{|V_\mu|}{2}+\nu n.
\]
If $|S|+\nu n\leq \frac{n}{2}-\frac{|V_\mu|}{2}+\nu n\leq |RN_{\nu}(S)|$, then we are done; so suppose that 
\begin{equation}\label{S_Ur}
|S| > \frac{n}{2}-\frac{|V_\mu|}{2}.
\end{equation}
Let $v\in B$ and note that by \eqref{S_Ur} we have 
\begin{align*}
d(v,S)\geq g+\gamma n-|B|-(|A|-|S|)=|S|-\left(\frac{1}{2}-\frac{|V_\mu|}{2}-\gamma\right)n> \gamma n\geq \nu n.
\end{align*}
Thus
\begin{equation}\label{BRN}
B \subseteq RN_{\nu}(S),
\end{equation}
and since $\Delta(G[S])<\nu^2 n$, we have $S \cap U_i = \emptyset$ for all $r+1 \leq i \leq k$.

\textbf{Case 1} ($\mu\leq r\leq \lambda-1$).
Using the fact that $S\subseteq A$, we have  
$$|S|\leq |A|\leq \sum_{i=1}^r|U_i| \leq \frac{n}{2},$$
where the second inequality holds by Observation \ref{l-fairfacts}.\ref{2} (since $r\leq \lambda-1$).
Now if  $|RN_{\nu}(S)|<|S| + \nu n$, then by \eqref{BRN} we have
\[ |S| + \nu n > |RN_{\nu}(S)| \geq |B| = n - |A| \geq \frac{n}{2}. \]
Rearranging gives
\[ \frac{n}{2}-\nu n< |S| \leq \frac{n}{2}, \]
which together with the fact that $\Delta(G[S])< \nu^2 n$ implies that $G$ is $\nu$-extremal.

\textbf{Case 2} ($r \geq \lambda$).  If $h = h_1$, then \eqref{RNS_r} implies $$|RN_\nu(S)|\geq \delta(U_r)-\sqrt{\nu}n\geq \delta(V_\lambda)-\sqrt{\nu}n\geq h_1 - |V_\lambda| + \gamma n - \sqrt{\nu} \geq (1+\nu)\frac{n}{2}$$ and thus we are done by \eqref{RN>1/2}.  So suppose $h = h_2$, that is, $|V_\lambda| \geq \frac{n}{2\lambda}$.

For all $v\in S_1$ we have
\[d(v, A)\geq h_2 + \gamma n-|U_1|-|B|=  \sum_{i=2}^r\left(|U_i|-\frac{\lambda-1}{r-1}\frac{n}{2\lambda}\right)+\gamma n.\]
So by Fact \ref{robustdegree}.(i) and the bounds on $\nu$, we have
\begin{align}
|RN_{\nu}(S_1) \cap A| &\geq \sum_{i=2}^r\left(|U_i|-\frac{\lambda-1}{r-1}\frac{n}{2\lambda}\right)+\gamma n -\sqrt{\nu}n\notag\\
&\geq \sum_{i=2}^r\left(|U_i|-\frac{\lambda-1}{r-1}\frac{n}{2\lambda}\right)+(r-1)\frac{\nu n}{2}.\label{RNSi}
\end{align}
So by \eqref{RNSi} there exists some $i$ with $2\leq i\leq r$, such that $|RN_{\nu}(S_1)\cap U_i|\geq |U_i|-\frac{\lambda-1}{r-1}\frac{n}{2\lambda}+\frac{\nu n}{2}$.  Now using Fact \ref{robustdegree}.(i) again to get a lower bound on $|RN_{\nu}(S_i)|$ and using the fact that $r\geq \lambda$, we have
\begin{align*}
|RN_{\nu}(S)|&\geq |RN_{\nu}(S_1)\cap U_i|+| RN_{\nu}(S_i)|\\
&\geq |U_i|-\frac{\lambda-1}{r-1}\frac{n}{2\lambda}+\frac{\nu n}{2}+ h_2-|U_i|\geq (1+\nu)\frac{n}{2},
\end{align*}
so by \eqref{RN>1/2} we have $|RN_{\nu}(S)| \geq |S| + \nu n$.
\end{proof}

Now we show that if $G$ is $\nu$-extremal, then despite the fact that $G$ is not a robust expander, $G$ has a Hamiltonian cycle.  We use the following theorem which gives a degree sequence condition for a bipartite graph to be \emph{Hamiltonian-biconnected}.  

\begin{theorem}[Berge {\cite[Chapter 10, Theorem 14]{Ber}}]\label{berge_lemma}
Let $G=(U,V,E)$ be a bipartite graph on $2m\geq 4$ vertices with vertices in $U=\{u_1, \dots, u_m\}$ and $V=\{v_1, \dots, v_m\}$ such that $d(u_1) \leq \cdots \leq d(u_m)$ and $d(v_1) \leq \cdots \leq d(v_m)$.  If for the smallest two indices $j$ and $k$ such that $d(u_j)\leq j+1$ and $d(v_k)\leq k+1$ (if they exist), we have $$d(u_j)+d(v_k)\geq m+2,$$
then for all $u\in U$ and $v\in V$, there exists a Hamiltonian path having $u$ and $v$ as endpoints.
\end{theorem}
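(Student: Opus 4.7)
I plan to prove this by a Bondy--Chvátal-style closure argument adapted to bipartite Hamiltonian-biconnectedness. Fix arbitrary $u \in U$ and $v \in V$ and suppose for contradiction that $G$ has no Hamiltonian $u$-$v$ path. The first step is a closure lemma: if $u' \in U$ and $v' \in V$ are non-adjacent in $G$ with $d_G(u') + d_G(v') \geq m+2$, then $G$ has a Hamiltonian $u$-$v$ path if and only if $G+u'v'$ does. To prove it, take a Hamiltonian $u$-$v$ path $P$ in $G+u'v'$ (which must use the added edge $u'v'$); a bipartite-rotation/pigeonhole argument on the at-least-$m+2$ combined neighbors of $u'$ and $v'$ in $G$ along $P$ produces a pair of consecutive path-vertices at which one can swap edges to splice $P$ into a Hamiltonian $u$-$v$ path in $G$, a contradiction. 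Iterating, I replace $G$ by its closure $\bar G \supseteq G$, in which every non-edge $u'v'$ between $U$ and $V$ satisfies $d_{\bar G}(u')+d_{\bar G}(v')\leq m+1$.

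The second step is to show that the sorted-degree hypothesis forces $\bar G = K_{m,m}$, which is trivially Hamiltonian-biconnected, contradicting the fact that $\bar G$ inherits the property ``no Hamiltonian $u$-$v$ path'' from $G$ via the closure lemma. Suppose instead $\bar G \neq K_{m,m}$ and let $u_a \in U$ be the smallest-index vertex having a non-neighbor in $V$ in $\bar G$, and let $v_b \in V$ be the smallest-index non-neighbor of $u_a$ in $\bar G$. Then $u_1,\dots,u_{a-1}$ all have $\bar G$-degree $m$; since $d_{\bar G}(u_i)\geq d_G(u_i)$ and the degrees are sorted, the indices $j,k$ of the hypothesis must satisfy $j \geq a$ and $k \geq b$. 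A short monotonicity comparison then yields $d_G(u_j)+d_G(v_k) \leq d_{\bar G}(u_a)+d_{\bar G}(v_b) \leq m+1$ (after arguing that the smallest-index non-edge of $\bar G$ witnesses the closure bound at degrees no larger than $d(u_j),d(v_k)$), contradicting the hypothesis $d_G(u_j)+d_G(v_k) \geq m+2$.

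The main obstacle will be the closure lemma's rotation argument in the bipartite setting: one must respect the bipartition parity of positions along $P$ when matching neighbors of $u'$ with neighbors of $v'$, and the two path-endpoints $u$ and $v$ must be preserved after rerouting --- which is precisely what forces the degree-sum threshold $m+2$ rather than the $m+1$ used for ordinary bipartite Hamiltonicity or the $m$ used for Hall-type perfect matchings. A secondary difficulty is the bookkeeping in the final step: one must handle the edge cases in which the indices $j$ or $k$ do not exist (in which case a short separate argument shows that the respective side has degrees so large that $\bar G$ is already complete bipartite), and verify that the minimal-index non-edge $(u_a,v_b)$ gives a bound at least as strong as evaluating at $(u_j,v_k)$ via monotonicity of the sorted sequences.
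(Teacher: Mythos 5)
The paper does not actually prove this theorem; it is quoted from Chapter 10, Theorem 14 of Berge's \emph{Graphs and Hypergraphs}, so I am judging your plan on its own terms. Your closure-based strategy is a sound route, and the threshold $m+2$ in the closure lemma is indeed the correct one: the bottleneck case is when the added edge $u'v'$ lies in the interior of the Hamiltonian $u$-$v$ path with $u'$ at an odd position, where a careful count shows there are only $m-2$ candidate splice positions $\ell$ while the neighbors of $u'$ and $v'$ guarantee at least $(d(u')-1)+(d(v')-2) = d(u')+d(v')-3 \geq m-1$ of them, forcing the needed overlap; the endpoint cases are easier. So step 1 is fine modulo bookkeeping.

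Your second step, however, has a genuine gap. You pick the \emph{smallest-index} non-edge $(u_a,v_b)$ of $\bar G$ and claim $j \geq a$ and $k \geq b$ because $u_1,\dots,u_{a-1}$ have $\bar G$-degree $m$. That inference does not follow: the indices are assigned by sorted $G$-degree, and a vertex $u_i$ with $i<a$ can perfectly well have $d_{\bar G}(u_i)=m$ while $d_G(u_i)\leq i+1$ (the closure can raise a degree from very small to $m$), in which case $j\leq i<a$. Consequently the chain $d_G(u_j)+d_G(v_k)\leq d_{\bar G}(u_a)+d_{\bar G}(v_b)$ is unjustified. The fix is to take instead a non-edge $u'v'$ of $\bar G$ \emph{maximizing} $d_{\bar G}(u')+d_{\bar G}(v')$. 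Setting $p=d_{\bar G}(u')$, $q=d_{\bar G}(v')$, closedness gives $p+q\leq m+1$, and maximality gives that every $\bar G$-non-neighbor of $v'$ in $U$ has $\bar G$-degree (hence $G$-degree) at most $p$. There are $m-q\geq 1$ such vertices, so $d_G(u_{m-q})\leq p\leq (m-q)+1$, which forces $j\leq m-q$ and therefore $d_G(u_j)\leq p$; symmetrically $d_G(v_k)\leq q$. Then $d_G(u_j)+d_G(v_k)\leq p+q\leq m+1<m+2$, the desired contradiction. With this replacement for step 2, your outline becomes a complete proof.
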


\begin{proposition}\label{extremal}
Let $n\geq k\geq 2$, $0< \frac{1}{n_0} < \nu < \min\{\frac{\gamma}{6}, \frac{1}{2\lambda}\}$.  If $G=(V_1\sqcup \dots \sqcup V_k, E)$ is a $k$-partite graph on $n\geq n_0$ vertices with $n/2\geq |V_1|\geq \dots\geq |V_k|$ such that $G$ is $\nu$-extremal and 
\[ \delta(V_i) \geq \Phi(|V_1|, \dots, |V_k|) - |V_i| + \gamma n ~\text{ for all } i\in [k],\]
then $G$ has a Hamiltonian cycle.
\end{proposition}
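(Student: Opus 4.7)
The plan is to exploit the near-bipartite structure imposed by the extremal set $S$ and establish Hamiltonicity by applying Berge's theorem (Theorem \ref{berge_lemma}) to a carefully chosen balanced bipartite auxiliary graph. Put $I = \{i : V_i \cap S \neq \emptyset\}$, $A = \bigcup_{i \in I} V_i$, $B = V(G) \setminus A$, $T = A \setminus S$, and $Y = B \cup T$. From $\nu$-extremality we have $|A| \leq n/2 \leq |B|$ and $|S| \geq n/2 - \nu n$, so $q := |Y| - |S| = n - 2|S|$ satisfies $0 \leq q \leq 2\nu n$, and $|T| = |A| - |S| \leq q/2 \leq \nu n$. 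Using the degree hypothesis together with Observation \ref{l-fairfacts}.\ref{7}, I would first record three basic estimates: every $s \in S \cap V_i$ satisfies $d(s, Y) \geq \Phi - |V_i| + \gamma n - \nu^2 n \geq n/4 + \gamma n / 2$; every $t \in T \cap V_i$ satisfies $d(t, B) \geq \Phi + \gamma n - |A| \geq \gamma n$; and every $b \in B$ satisfies $d(b, S) \geq \Phi + \gamma n - |Y| \geq \gamma n / 2$.

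Next I would greedily construct a system $\mathcal{M}$ of exactly $q$ pairwise vertex-disjoint edges in $G[Y]$ with two properties: (a) $\mathcal{M}$ saturates $T$, pairing each $t \in T$ with a $B$-neighbor, which is feasible since $d(t, B) \geq \gamma n \gg q$; and (b) every $b \in B$ with $d(b, S)$ below a chosen threshold appears as an endpoint of some edge of $\mathcal{M}$, paired with a high-$d(\cdot, S)$ partner in $B$. The remaining slots of $\mathcal{M}$ are filled with arbitrary vertex-disjoint $BB$-edges. The count of low-$d(\cdot, S)$ vertices in $B$ is controlled via an averaging argument on $e(S, B) \geq |S|(n/4 + \gamma n / 2)$, and the total budget $|V(\mathcal{M})| = 2q \leq 4\nu n$ is small compared to the ambient degrees.

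Form the balanced bipartite auxiliary graph $H$ on $X = S$ and $Y^\ast = (Y \setminus V(\mathcal{M})) \cup \mathcal{M}^\ast$, where each $e \in \mathcal{M}$ is contracted to a super-vertex $e^\ast$ made adjacent in $H$ to $s \in X$ iff $s$ has a $G$-neighbor among the two endpoints of $e$. Then $|Y^\ast| = |S| =: m$, so $H$ is bipartite on $2m$ vertices. On the $X$-side, each vertex has $d_H \geq n/4 + \gamma n / 6$; the careful choice of $\mathcal{M}$ ensures all singletons in $Y^\ast$ have $d_H$ at least the threshold, while each super-vertex inherits the larger $d(\cdot, S)$ of its two endpoints. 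I would then verify Berge's degree-sequence condition for $H$ and apply Theorem \ref{berge_lemma}, obtaining a Hamiltonian $u$-$v$ path in $H$ for any adjacent pair $u \in X$, $v \in Y^\ast$; combining with any edge of $H$ closes this into a Hamiltonian cycle of $H$, which lifts to a Hamiltonian cycle of $G$ by re-expanding each super-vertex along its $\mathcal{M}$-edge.

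The principal obstacle is verifying Berge's degree-sequence condition in the last step: singleton $Y^\ast$-vertices can have $d_H$ as low as $\gamma n / 2$, far below $m/2 \approx n/4$, so a uniform minimum-degree argument fails. The pairing in Step 2 must be calibrated so that, in the sorted degree sequence on the $Y^\ast$-side, the smallest index $k$ satisfying $d_H(v_k) \leq k+1$ compensates for the corresponding smallest $j$ with $d_H(u_j) \leq j+1$ on the $X$-side via $d_H(u_j) + d_H(v_k) \geq m + 2$; this becomes a careful but routine degree count once every low-$d(\cdot, S)$ vertex of $B$ has been absorbed into a super-vertex of high aggregate degree in Step 2.
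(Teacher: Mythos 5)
Your overall strategy — reduce to a balanced bipartite auxiliary graph, absorb the imbalance via a small gadget, and invoke Berge's theorem — parallels the paper's, but your choice of bipartition and absorption mechanism introduces a gap that the paper's construction is specifically designed to avoid.

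The paper bipartitions $V(G)$ as $(A,B)$ where $A=\bigcup_{i\in I}V_i$ is the full union of the parts meeting $S$; in particular $T=A\setminus S$ stays on the $A$-side. This keeps the bipartition compatible with the $k$-partite structure and means a Hamiltonian cycle alternating between $A$ and $B$ uses only genuine $G$-edges, with no need to find edges inside either class. The imbalance $|B|-|A|$ is then absorbed by the explicit path $P^*$, which picks up extra $B$-vertices using vertices of $B^*$ whose within-$B$ degree is at least $\gamma n/2$; crucially, $P^*$ is a concrete path in $G$, so no lifting step is needed. You instead bipartition as $(S,Y)$ with $Y=B\cup T$ and contract $q$ edges of $G[Y]$ to super-vertices. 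The lifting step is where this fails: when a super-vertex $e^\ast$ arising from an edge $uv\in\mathcal{M}$ lies between $s_1,s_2\in S$ on the cycle of $H$, you must realize it in $G$ as $s_1uvs_2$ or $s_1vus_2$, which requires one of $s_1$ and $s_2$ to attach to $u$ and the other to $v$. But your plan saturates $T$ with $\mathcal{M}$, and a vertex $t\in T\cap V_i$ with $V_i\subseteq A$ can easily have $d(t,S)=0$ (for instance, in the balanced bipartite case with $A=V_1$, \emph{every} $t\in T$ has $d(t,S)=0$ since $S\subseteq V_1$). For the super-vertex $\{t,b\}$ with $d(t,S)=0$, both $H$-neighbors of $e^\ast$ can only be adjacent to $b$, and neither orientation of the edge $tb$ gives a valid lift. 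Defining $d_H(e^\ast)$ as adjacency to \emph{either} endpoint does not save this: a bipartite Hamiltonian cycle in $H$ gives no control over which endpoint each $S$-neighbor actually hits.

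Two secondary issues. First, the absorption budget is only $q\le 2\nu n$, but the averaging bound you invoke (from $e(S,B)\ge |S|(n/4+\gamma n/2)$ against $e(S,B)\le |L|\cdot cn+(|B|-|L|)|S|$) only gives $|L|=O(n)$ low-degree $B$-vertices, far exceeding $q$; so you cannot in general absorb all of them. Second, ``careful but routine degree count'' understates what is needed in the Berge step: the uniform $X$-side bound $d_H(s)\ge n/4+\gamma n/6$ (from Observation~\ref{l-fairfacts}.\ref{7} applied with $|V_1|$) is too weak on its own. The paper must and does argue that the critical Berge index $a_j$ lies in $S'_i$ for some $i\ge\mu$, so that the sharper bound $\delta(S'_i,B')\ge n-\Phi+1$ applies and pairs with $\delta(B',A')\ge\Phi-n/2+1$ to give the cancellation $d(a_j)+d(b_k)\ge n/2+2$. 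This cancellation is structural, not routine, and your sketch does not reproduce it. If you wish to pursue a contraction-style argument, you would at minimum need to (a) keep $T$ on the $S$-side as the paper does, or absorb $T$-vertices into length-two $B$-$T$-$B$ paths whose $B$-endpoints both have many $S$-neighbors, and (b) carry out the index-localization for $a_j$ explicitly rather than appeal to minimum degree alone.
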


\begin{proof}
Suppose $G$ is $\nu$-extremal as witnessed by a set $S\subseteq V(G)$.  Let $U_1, \dots, U_r$ be the parts of $G$ which have non-empty intersection with $S$.  Let $A=\bigcup_{i=1}^rU_i$ and $B=V(G)\setminus A$.  
Let $0\leq \ceiling{|B|-\frac{n}{2}}=:t\leq \nu n$.   Let $\cB=\{U_i: U_i\subseteq B\}$, let $\cB^+=\{U_i\in \cB: |U_i|>h-\frac{n}{2}+\frac{\nu n}{\lambda}\}$, let $B^+=\bigcup_{U_i\in \cB^+}U_i$, and let $B^*=B\setminus B^+$.  Note that 
\begin{equation}\label{A-S}
|A\setminus S|\leq \nu n.
\end{equation} 
Also note that $$|B^+|=\sum_{U_i\in \cB^+}|U_i|\leq n/2;$$ indeed, if $h=h_1$, then by definition $V_\lambda\not\in \cB^+$ and thus $|B^+|=\sum_{U_i\in \cB^+}|U_i|\leq \sum_{i=1}^{\lambda-1}|V_i|\leq n/2$ and if $h=h_2$, then $|\cB^+|\leq \lambda-1$ (which implies $|B^+| \leq n/2$, both by Observation \ref{l-fairfacts}.\ref{2}), as otherwise 
$$|B|> |\cB^+|(\frac{n}{2\lambda}+\frac{\nu n}{\lambda})\geq \frac{n}{2}+\nu n.$$
Thus $|B^*|=\sum_{U_i\in \cB\setminus \cB^+}|U_i|\geq |B|-\frac{n}{2}$, which implies $|B^*|\geq \ceiling{|B|-\frac{n}{2}}=t$.  Now for all $v\in B^*$, using the fact that $|A|\leq n/2$, we have
\begin{equation}\label{B*}
d(v, B)\geq h+\gamma n- \left(h-\frac{n}{2}+\frac{\nu n}{\lambda}\right)-|A|\geq \frac{\gamma n}{2}.
\end{equation}
So we let $y_1, \dots, y_t\in B^*$.  By \eqref{B*} and the fact that $t\leq \nu n \leq \frac{\gamma}{6} n$, we can greedily choose distinct vertices $x_1, z_1, \dots, x_t, z_t\in B\setminus \{y_1, \dots, y_t\}$ such that $y_ix_i, y_iz_i\in E(G)$ for $i \in [t]$.  Note that for all $v\in B$,
\begin{align}
d(v, S)\geq \Phi+\gamma n- |B|-(|A|-|S|) \geq g-\frac{n}{2}+\frac{\gamma n}{2} \geq \frac{\gamma n}{2},\label{BtoS}
\end{align}
where the last inequality holds by \eqref{extremalS}.

By \eqref{BtoS} we can choose distinct vertices $a_1, a_1', a_2, a_2', \dots, a_{t-1}, a_{t-1}', a_t\in S$ such that $a_ix_i, a_i'z_i\in E(G)$ for all $i\in [t]$.  Now for all $u, v\in S$, by Observation \ref{l-fairfacts}.\ref{7} and \eqref{A-S}, we have that
\begin{align}
|(N(u)\cap N(v))\cap B|&\geq 2 \left( \Phi+\gamma n -|V_1|-\nu n-|A\setminus S| \right) -|B|\notag\\
&\geq \frac{n}{2} +\gamma n+\nu n-|B| \geq \gamma n \geq 6 \nu n \geq 5t\label{StoB}.
\end{align}
So by \eqref{StoB}, for all $a_i', a_{i+1}$ with $i\in [t-1]$ we can greedily choose a common neighbor $w_{i+1}\in B\setminus \{x_1,y_1,z_1, \dots, x_t,y_t,z_t, w_1,\dots, w_i\}$.  Let 
\[P^*=a_1x_1y_1z_1a_1'w_2a_2x_2y_2z_2a_2'w_3\dots w_{t}a_tx_ty_tz_t.\] If $n$ is even, set $P:=P^*$, and if $n$ is odd, set $P:=P^*-z_t$ (see Figure \ref{fig:P*}). 

\begin{figure}[ht]
\centering
\begin{tikzpicture}[scale = .7]
\draw (-.5, 0) rectangle (12.5, 3);
\draw[fill = lightgray!50] (-.4, 1.7) rectangle (12.4, 2.7);
\draw (-1.5, 1.5) node{$B$};
\draw (-.5, -.5) rectangle (12.5, -3.5);
\draw[fill = lightgray!50] (-.4, -.6) rectangle (12.4, -3);
\draw (-1.5, -2) node{$A$};

\draw (0, -1) node{$\bullet$} node[below]{$a_1$} --
(.5, .5) node{$\bullet$} node[above]{$x_1~~$} --
(1, 2) node{$\bullet$} node[above]{$y_1$} --
(1.5, .5) node{$\bullet$} node[above left]{$z_1$} --
(2, -1) node{$\bullet$} node[below]{$a_1'$} --
(2.5, .5)node{$\bullet$} node[above]{$w_2$} --
(3, -1) node{$\bullet$} node[below]{$a_2$} --
(3.5, .5) node{$\bullet$} node[above]{$x_2~~$} --
(4, 2) node{$\bullet$} node[above]{$y_2$} --
(4.5, .5) node{$\bullet$} node[above left]{$z_2$} --
(5, -1) node{$\bullet$} node[below]{$a_2'$} --
(5.5, .5) node{$\bullet$} node[above]{$w_3$} --
(6, -1) node{$\bullet$} node[below]{$a_3$};
\draw[dashed] (6, -1) -- (6.5, .5);
\draw (7.5, -.25) node{$\cdots$};
\draw (7.5, 2.3) node{$B^*$};
\draw (7.5, -2) node{$S$};
\draw[dashed] (8.5, .5) -- (9, -1);
\draw (9, -1) node{$\bullet$} node[below]{$a_{t-1}'$} --
(9.5, .5) node{$\bullet$} node[above]{$w_t$} --
(10, -1) node{$\bullet$} node[below]{$a_t$} --
(10.5, .5) node{$\bullet$} node[above]{$x_t~~$} --
(11, 2) node{$\bullet$} node[above]{$y_t$} --
(11.5, .5) node{$\bullet$} node[above left]{$z_t$};
\end{tikzpicture}
\caption{The path $P^*$.}\label{fig:P*}
\end{figure}

We now claim that in either case $|A\setminus P|=|B\setminus P|$.
By the definition of $t$, we have $|B|-|A|=2t$ if $n$ is even and $|B|-|A|=\frac{n-1}{2}+t-(\frac{n+1}{2}-t)=2t-1$ if $n$ is odd.  If $n$ is even, we have 
$$|A\setminus P|=|A|-2t-1=|B|-4t-1=|B\setminus P|,$$ and if $n$ is odd we have
$$|A\setminus P|=|A|-2t-1=|B|-2t-1-(2t-1)=|B|-(4t-2)=|B\setminus P|.$$  

Let $G'$ be the graph obtained from $G[A, B]$ by deleting all the interior vertices of $P$. Note that by the calculations above and since the endpoints of $P$ are in different parts, $G'$ is a balanced bipartite graph. Let $A'$ and $B'$ be the parts of $G'$ with $A' \subseteq A$ and $B' \subseteq B$ and $|A'|=n'$ and $|B'|=n'$. Additionally, let $S' = A' \cap S$, and for all $i\in [k]$ let $S_i'=S'\cap V_i$. We will use Theorem \ref{berge_lemma} to show that $G'$ is Hamiltonian bi-connected, which will allow us to find a Hamiltonian path in $G'$ connecting the endpoints of $P$.

We have
\begin{align}
\delta(B', A') &\geq \Phi+\gamma n - |B| - 2t \geq \Phi-\frac{n}{2} + 1,\label{e1} \\
\delta(A', B') &\geq \Phi+\gamma n - |A| - 4t \geq \frac{\gamma n}{2},\label{e2}
\end{align}
and for all $h\in [\mu-1]$ and $i\in [k]\setminus [\mu-1]$, 
\begin{align}
\delta(S_h', B') &\geq f_h(G)+\gamma n - |V_h|-\nu n-|A\setminus S| - 4t> \sum_{i=1}^h|V_i| +1, \label{e3}\\
\delta(S_i', B') &\geq g(G)+\gamma n - |V_\mu|-\nu n-|A\setminus S| - 4t\notag\\
&\geq \frac{n}{2}-\frac{|V_\mu|}{2} +1\geq \frac{n}{2}-\left(\Phi-\frac{n}{2}\right)+1=n-\Phi+1. \label{e4}
\end{align}

Let $a_1, \dots, a_m$ be the vertices of $A'$ and let $b_1, \dots, b_m$ be the vertices of $B'$, with $d_{G'}(a_1) \leq \cdots \leq d_{G'}(a_m)$ and $d_{G'}(b_1) \leq \cdots \leq d_{G'}(b_m)$, and let $j$ and $k$ be the smallest two indices such that $d_{G'}(a_j) \leq j+1$ and $d_{G'}(b_k) \leq k+1$.  By \eqref{e2}, since 
\[|A'\setminus S'| = |A\setminus S|\leq  \nu n< \frac{\gamma n}{2} \leq \delta(A', B'),\] 
we have $a_j \in S'$.  From \eqref{e3}, we have $a_j\not\in S_h'$ for all $h<\mu$, so $a_j\in S_i'$ for some $i\geq \mu$, and thus by \eqref{e1} and \eqref{e4} we have
\[ d_{G'}(a_j) + d_{G'}(b_k) \geq \delta(S_i', B') + \delta(B', A') \geq \frac{n}{2} + 2 \geq n' + 2.\]
So by Theorem \ref{berge_lemma}, $G'$ has a Hamiltonian path between any $a\in A'$ and $b\in B'$. In particular, there is a Hamiltonian path $H$ between the two endpoints of $P$, so our desired Hamiltonian cycle in $G$ is $H \cup P$.

\end{proof}

Finally, we put everything together and prove that for all $n\geq k\geq 2$ and $0< \frac{1}{n_0}\ll \gamma$, if $G=(V_1\sqcup \dots \sqcup V_k, E)$ is a $k$-partite graph on $n \geq n_0$ vertices with $n/2\geq |V_1|\geq \dots\geq |V_k|$ such that
\begin{equation}\label{phi}
\delta(V_i) \geq \Phi +\gamma n-|V_i| ~\text{ for all } i\in [k],
\end{equation}
then $G$ has a Hamiltonian cycle.

\begin{proof}[Proof of Theorem \ref{l-boundedHC}]
Let $0<\tau \leq \frac{\gamma}{4}$ and $0< \frac{1}{n_0}\ll\nu \leq \min\{\frac{4}{(k-1)^2}, \frac{\tau^4}{4k^2}\}\leq \min\{\frac{\gamma^2}{4}, \frac{\gamma}{2\lambda}, \frac{\gamma}{6}\}$.  Suppose $G$ satisfies \eqref{phi}.
By Proposition \ref{degcondrobust}, either $G$ is $\nu$-extremal, in which case we are done by Proposition \ref{extremal}, or else for all $S'\subseteq V(G)$ with $\tau^2n\leq |S'|\leq (1-\tau^2)n$ and $\Delta(G[S'])<\nu^2 n$, we have $|RN_\nu(S')|\geq |S'|+\nu n$.  From Lemma \ref{nosparsecuts}, for all $B\subseteq V(G)$ with $\tau n\leq |B|\leq (1-\tau)n$, we have $e(B, V(G)\setminus B)\geq 2\tau^2 n^2$.  So by Lemma \ref{robustorindep}, $G$ is a $(\nu^2, \tau)$-robust expander.  Finally, since $\delta(G)\geq n/4$ (by Observation \ref{l-fairfacts}.\ref{7}) and $G$ is a $(\nu^2, \tau)$-robust expander, $G$ has a Hamiltonian cycle by Theorem \ref{robustexpanderHC}.
\end{proof}

\section{Acknowledgements}

We thank the referees for their suggestions which helped improve the exposition.


\begin{thebibliography}{99}
\bibitem{Ber} C. Berge. Graphs and Hypergraphs. Second revised edition. Amsterdam: North-Holland Publishing Co., 1976. 

\bibitem{CFGJL} G. Chen, R. Faudree, R. Gould, M. Jacobson, and L. Lesniak. Hamiltonicity in balanced $k$-partite graphs. \emph{Graphs and Combinatorics} \textbf{11}, no. 3 (1995) 221--231.

%\bibitem{CJ} G. Chen and M. Jacobson. Degree sum conditions for hamiltonicity on $k$-partite graphs. \emph{Graphs and Combinatorics} \textbf{13}, no. 4 (1997) 325--343.

\bibitem{DS} L. DeBiasio and N. Spanier. A note about Hamiltonian cycles in balanced $k$-partite graphs.  arXiv preprint arXiv:1907.02004 (2019).

\bibitem{DN} L. DeBiasio and L. L. Nelsen. Monochromatic cycle partitions of graphs with large minimum degree. \emph{Journal of Combinatorial Theory, Series B} \tbf{122}, (2017) 634--667.

\bibitem{D} G. A. Dirac. Some theorems on abstract graphs. \emph{Proceedings of the London Mathematical Society} \tbf{3}, no. 1 (1952), 69--81.

\bibitem{KOT} D. K\"uhn, D. Osthus, and A. Treglown. Hamiltonian degree sequences in digraphs. \emph{Journal of Combinatorial Theory, Series B} \tbf{100}, no. 4 (2010), 367--380.

%\bibitem{LV} M. Las Vergnas. Sur l'existence des cycles hamiltoniens dans un graphe. \emph{CR Acad, Sci. Paris, Sér. A}  \tbf{270} (1970), 1361--1364.

\bibitem{LP} A. Lo and V. Patel. Hamilton cycles in sparse robustly expanding digraphs. \emph{The Electronic Journal of Combinatorics} \tbf{25}, no. 3 (2018),  \# P3.44.

\bibitem{MM} J. Moon and L. Moser. On Hamiltonian bipartite graphs. \emph{Israel Journal of Mathematics} \tbf{1}, no. 3 (1963), 163--165.

\bibitem{T} W. T. Tutte. The 1-factors of oriented graphs. \emph{Proceedings of the American Mathematical Society} \tbf{4}, no. 6 (1953), 922--931.

%\bibitem{Y} K. Yokumora. A degree sum condition on hamiltonian cycles in balanced 3-partite graphs. \emph{Discrete Mathematics} \tbf{178}, no. 1-3 (1998), 293--297.
\end{thebibliography}
\end{document}